\documentclass[a4paper,12pt,reqno]{amsart}
\usepackage{amssymb,amsfonts,amsmath,amscd,amsthm,latexsym,euscript}%,textcomp,amscd,
\usepackage{times}
\usepackage{graphicx}
\usepackage{tikz}
\usetikzlibrary{cd}
\usetikzlibrary{arrows.meta}
\usepackage{geometry}   
\usepackage{xcolor}
\usepackage{float}
\usepackage{algorithm}
\usepackage{algpseudocode}

\usepackage[bookmarks=false]{hyperref}
\hypersetup{%
    colorlinks=true,        % false: boxed links; true: colored links
    linkcolor=blue,          % color of internal links (change box color with linkbordercolor)
    citecolor=blue,         % color of links to bibliography
    urlcolor=blue           % color of external links
    }

\usepackage{seqsplit}

\newcommand{\dashedhrule}{%
  \noindent\tikz{\draw[dash pattern=on 4pt off 3pt, line width=0.4pt] (0,0) -- (\linewidth,0);}%
}

\newtheorem{theor}{Theorem}[section]

\theoremstyle{definition}
\newtheorem{claim}[theor]{Claim}

\newtheorem*{convention}{Convention}
\newtheorem{prop}[theor]{Proposition}%[section]
\newtheorem{proposition}[theor]{Proposition}%[section]
\newtheorem{lemma}[theor]{Lemma}%[section]
\newtheorem{cor}[theor]{Corollary}%[section]
\newtheorem{conjecture}[theor]{Conjecture}%[section]
\newtheorem{define}{Definition}[section]

\newtheorem{notation}{Notation}
\newtheorem{problem}{Problem}%[section]
\newtheorem{open}[problem]{Open problem}

\newtheorem{ex}{Example}[section]
\newtheorem*{implementNo}{Implementation}%[section]
\theoremstyle{definition}
\theoremstyle{definition}
\theoremstyle{remark}
\newtheorem{rem}{Remark}[section]
\newtheorem*{remNo}{Remark}

\newcommand{\BBR}{\mathbb{R}}
\newcommand{\BBN}{\mathbb{N}} 
\newcommand{\BBS}{\mathbb{S}}
\newcommand{\BBZ}{\mathbb{Z}}

\newcommand{\veps}{\varepsilon}

\DeclareMathOperator{\Vect}{Vect}

\DeclareMathOperator{\Wronsk}{Wronsk}

\DeclareMathOperator{\consta}{\boldsymbol{c}}

\newcommand{\constentry}[4]{%
  \noindent $p = #1$\par
  \noindent $N = #2$\par
  \noindent $\consta%\mathrm{const}
  (p) = $\seqsplit{#3}\par
  \noindent\textbf{Prime factorization:} #4\par
  \vspace{0.5em}
  \dashedhrule
  \vspace{0.5em}
}

\newcommand{\ta}{\mathcal{T}}

\hypersetup{
    colorlinks=true,
    linkcolor=blue,
    citecolor=blue,
    urlcolor=blue
}
\newcommand{\seqnum}[1]{\href{https://oeis.org/#1}{\underline{#1}}}

\newcommand{\by}[1]{\textit{{#1}}}
\newcommand{\jour}[1]{\textit{{#1}}}
\newcommand{\vol}[1]{\textbf{{#1}}}

\title[Fast calculation of OEIS\:A392\,714]{New integer sequence OEIS\:A392\,714 counts Wronskians:\\[3pt] fast evaluation via late\/-\/growing permutations}

\author[K.\,C.\,Shah]{Kian C.\ Shah}
\thanks{\textit{Address:} Bernoulli Institute for Mathematics,
  Computer Science and Artificial Intelligence, University of Groningen,
  P.O.\ Box 407, 9700\,AK Groningen, The Netherlands.}
\author[A.\,V.\,Kiselev]{Arthemy V.\ Kiselev${}^{\text{*}}$}
\thanks{${}^{\text{*}}$\:Corresponding author.\quad
  \textit{E-mail:} %  \href{mailto:a.v.kiselev@rug.nl}
  {\texttt{a.v.kiselev\symbol{"40}rug.nl}}}

\subjclass[2020]{%Primary 
05A05, % Permutations;
11Y55, % Calculation of integer sequences;
15A15, % Determinants;
secondary 
05A15, % Exact enumeration;
11B83, % Special sequences;
68R05, % Combinatorics in CS;
68W30. % Symbolic and algebraic computation.
\quad \textit{PACS}: 
02.10.Ox, % Combinatorics and graph theory,
02.20.Sv.% Lie algebra.
}

\keywords{Integer sequence OEIS A392\,714, Wronskian determinant, differential operator, alternating composition, Lie bracket, homotopy Lie algebra,
late\/-\/growing permutation, falling factorials,
memoization, %backtracking algorithm,
$p$-adic valuation}

\date{6 October 2026}
\begin{document}
\begin{abstract}%\noindent%
The alternating composition of $N = 2p$ weighted differential
operators $w_j(x)\cdot\partial_x^{\,p}$ of strict order $p$ on the line
$\mathbb{R} \ni x$ is again an operator of order $p$; its co\-ef\-fi\-ci\-ent
is the universal constant $\consta(p)$ times the Wronskian of the weights
$w_1,\ldots,w_N$. Lie brackets of vector fields fix $\consta(p=1)=1$; we want to find $\consta(p \geqslant 2)$:
e.g., $\consta(2) = 2$ or $\consta(3) = 90$. Direct
symbolic expansion (over $|S_{2p}| =(2p)!$ permutations) fails for $p \geqslant 4$.
Taking the monomials $w_j = x^{j-1}$ reduces the sum\-ma\-tion to
the much smaller set $\Phi_p \subseteq S_{2p-1} \subsetneq S_{2p}$ of late\/-\/growing per\-mu\-ta\-ti\-ons.
   %now $\consta(p)$ is expressed 
Expressing $\consta(p)$ as a signed sum of products of falling factorials,
we im\-ple\-ment and speed up the algorithm that gains all the integer values up to
$\consta(18) = 4.881\ldots \cdot 10^{462}$. 
   %To speed up the algorithm, we use a backtracking generator, memoized recursion,
   %and mod-primes accumulation of $\consta(p)$ via Chinese
   %Remainder Theorem. We thus gain all the values $\consta(p \leqslant 18)$, 
   %e.g., 
   %$\consta(15) = 5.046\ldots \cdot 10^{288}$. 
The resulting sequence is new, now registered as OEIS\:A392\,714;
its (sub)lea\-d\-ing\/-\/or\-der growth rate is
$\log \consta(p) \simeq 2p^2\log p -b p^2 + \overline{o}(p^2)$ for $p\gg 1$, 
with $b\geqslant 2.6744$.
\end{abstract}
\maketitle

\tableofcontents

\subsection*{Introduction}\label{sec:intro}
\noindent Vector fields $\vec{w} = w(x)\cdot\partial_x$ with smooth coefficients on
the real line $\mathbb{R} \ni x$ form the Lie algebra with respect to the
bracket
$[\vec{w}_1, \vec{w}_2]
 = \sum_{\sigma \in S_2} (-1)^{\sigma}\,
   \vec{w}_{\sigma(1)} \circ \vec{w}_{\sigma(2)}$,
where $\circ$ denotes the associative composition of differential
operators (here, of order $p = 1$). The arity of the Lie bracket is
$N = 2p = 2$, so that $S_2$ here is the permutation group of two elements
and $(-1)^{\sigma}$ denotes the sign of a permutation $\sigma \in S_N$.
The Lie bracket of vector fields satisfies the Jacobi identity, which is
immediately seen for these differential operators
$\vec{w}_j \in \mathrm{D}_1(\mathbb{R})$ of order $p = 1$.

Binary Lie algebra structures $[\cdot,\cdot]$ are deformed into strongly
homotopy (SH) Lie algebras; this yields linear combinations of totally
antisymmetric brackets of various arities $N \geqslant 2$. The formal
deformations (of the Lie structure $[{\cdot},{\cdot}]$ as the leading term% 
%in $\Delta$
),
\[
  [\cdot,\cdot] \;\longmapsto\;
  \Delta \;=\; [\cdot,\cdot]_{N=2} + [\cdot,\cdot,\cdot]_{N=3}
  + [\cdot,\ldots,\cdot]_{N=4} + \cdots,
\]
satisfy the countable set of Jacobi-type identities (see% Lada and Stasheff
~\cite{LadaStasheff,Stasheff2019Review} and references therein). The case of only
even arities $N = 2p$ for $p \in \mathbb{N}_{\geqslant 1}$ is of particular
interest both in Lie theory and physics. Indeed, for arbitrary even
arities, the alternated compositions of elements in an associative
algebra $\mathcal{A}$ do provide the multi-linear antisymmetric brackets that
satisfy the strongly homotopy Lie algebra's Jacobi identities 
(see~\cite{DzhumaAssoc,HanlonWachs}).
   %\cite{Kiselev2005AssociativeWronskians} and references therein).
We study the realisations of such SH-Lie structures by the associative
algebras $\mathrm{D}_p(\mathbb{R})$ of differential operators of strict order
$p \geqslant 1$ on the affine real line (\cite{Kiselev2005AssociativeWronskians,Kiselev_2025}); 
in the present paper%this work
, the brackets' arities are even positive integers,\footnote{\label{FootJacobiWron}
The Wronskians over $\mathbb{R}^1$
are known to satisfy the SH-Lie algebra's Jacobi identities for
arbitrary, i.e.\ even or odd, numbers of arguments~\cite{DzhumaWron}.
The multivariate, complete generalised Wronskians for functions in many
variables on $\mathbb{R}^{d > %\geqslant 
1}$ do again satisfy all the Jacobi
identities of SH-Lie algebras~\cite{Kiselev2005AssociativeWronskians};
this remains true for the incomplete generalised Wronskians over
$\mathbb{R}^{d > %\geqslant 
1}$ (see~\cite{Kiselev2025Multidim}).} 
%%%
$N = 2p \in \mathbb{N}$. It is readily seen
(Theorem~\ref{thm:main} on p.~\pageref{thm:main} below) that the arising
$N$-ary brackets act on the operators' coefficients $w_j(x)$ as Wronskian
determinants times the universal constants
$\consta(p) \in \mathbb{Z}$. Clearly, $\consta(1) = 1$.
Counting the Wronskians of size $2p \times 2p$ for $p \geqslant 1$, the
numbers $\consta(p)$ form an integer number sequence; constructed
in the most natural way, this sequence is, surprisingly, new. We discover
this by calculating its terms as far as $\consta(p = 18)$.

\smallskip%\S2.
\textbf{\textit{Problem outline.}}%Problem.
\quad The problem which we solve in this paper is: to design a fast algorithm for computation of the integers $\consta(p)$ for $1< p \lesssim 20$,
and to actually find the exact values as far as possible.
We reach $\consta(p=18)$ expressed by using 463 decimal digits.
We discover that the positive integer sequence $\bigl\{ \consta(p) \bigr\}_{p\in\BBN}$ is new; its observed growth rate is
$\log \consta(p) = \alpha p^2\log p + \beta p^2 + \overline{o}(p^2)$ with $\alpha\approx 2$ and $\beta \approx -2.67$ on $p \gtrsim 10$.
The known integers $\consta(p)$ have remarkable prime decompositions, the law and properties of which are yet to be explained: 
the nucleus of small primes (${}\leqslant p$) in high powers and a few primes in the intermediate range (between $p$ and $2p$) are mixed with very few big primes (${}\sim 10^{n\cdot 100}$ for $p \gtrsim 10$); e.g., the factorisation of $\consta(p=14)$ gave us a new confirmed prime of 206 decimal digits (see App.~\ref{app:data}).
%(these, we expect, might be used to generate very big primes${}\sim 10^{n\cdot 100}$
%from the tabulated composite numbers $\consta(p)$ at $p \gtrsim 10$).

\smallskip
\textbf{\textit{Method.}}%Method.
\quad The direct approach to finding the values $\consta(p)$ --\,by alternating $p$th order differential operators on the real line\,-- is unfeasible %fails
for $p\geqslant 4$. We reduce the problem from analysis to that in arithmetic and combinatorics by taking the monomials $1$,\ $x$,\ $\ldots$,\ $x^{2p-1}$ as operators' weights, whence we express the sought values $\consta(p)$ in terms of falling factorials (see Eq.~\eqref{eq:closedform} on p.~\pageref{eq:closedform} below).
The alternation over the permutation group $S_{2p}$ is by then reduced to the much smaller set $\Phi_p \subseteq S_{2p-1}\subsetneq S_{2p}$ 
of late\/-\/growing permutations.\footnote{\label{FootEvenOddPhiP}
We observe (and prove rigorously in~\cite{OddEven}) that the number of 
even and odd permutations in $\Phi_p$ differ by~$(-1)^{p+1}$.%
%we could not find this law in the available literature.
}

\smallskip
\textbf{\textit{Result.}}%Result.
\quad Our study has so far resulted in not only the values $\consta(p\leqslant 18)$ but also in the algorithm to compute them (implemented in \textsf{Python} and \textsf{C${+}{+}$}, see~\cite{ShahCode2026}; separately also in \textsf{Julia}, see~\cite{PetrellaJulia}) which underwent many iterations of optimisation for speed and memory allocation. The algorithm now relies on the backtracking (to avoid generating the entire set $\Phi_p$), on memoization (to avoid recalculation of the products of falling factorials), and on the Chinese Remainder Theorem (to avoid the arithmetic of integers${}\sim 10^{n\cdot 100}$ in a naive accumulation of the big numbers
$\consta(p\gtrsim 10)$, by now using the economical summation modulo several primes).
The observed growth law $\log\consta(p) = 2 p^2 \log p + O(p^2)$ tells us in advance the sufficient number of primes${}> 2p$ modulo which the fast arithmetic uniquely determines the value $\consta(p)$ at the end of the day.
The implementation of thus\/-\/accelerated algorithm can be run on powerful computing resources, allowing the discovery of subsequent values~$\consta(p\geqslant 19)$
and new big primes in their factorisation.

%Our contribution.

\smallskip
\textbf{\textit{Relation to other work%Significance of the result
.}}%Significance.
\quad The problem of counting the Wronskians --\,as they stem from the alternation of $p$th order differential operators\,-- is natural in differential calculus in the broadest sense; this counting problem could have been posed during the study of `infinitesimal groups' by Lie, when vector fields were recognised as differential operators of order $p=1$ with vanishing free term, and when their commutator was written to grasp the (non)\/permutability of two %$N=2p$ 
flows.
   %Differential-geometric interpretation of the alternation.
By counting the Wronskians at orders $p>1$ and arities $N=2p > 2$, we detect that 
the growth rate of the constants is $\consta(p) \sim p^{2p}$ for big~$p$.
Knowing this law is important in Theoretical Physics, specifically in the models which are based on the (strongly) homotopy deformations of Lie algebras (\cite{LadaStasheff,Stasheff2019Review}): the super\/-\/exponential rate
($\consta(1)=1$ and $\consta(2)=2$ but $\consta(3)=90$ and $\consta(4)=586\,656$) could dictate that collective effects are effectively suppressed for arities~$N>4$.

\begin{remNo}[{$\consta(p)$ at the crossroads}]\label{RemZaboronskiIHES}
The integers $\bigl\{ \consta(p) \bigr\}_{p\in\mathcal{I}\subsetneq\BBN}$ and formulas expressing the $p$th element $\consta(p)$ --\,other than equality~\eqref{eq:main} which defines the sequence and formula~\eqref{eq:closedform} which we use to obtain all the known values\,-- have been recognised in other domains of mathematics:
e.g., in combinatorics (of graphs and their multi\/-\/colouring problems) as well as in differential geometry (where the integers $\consta(p)$ count the top intersection number of Chern classes on a complete flag manifold $F\ell_{2p}$).
We owe these interpretations to O.\,Zaboronski 
(private communication).\footnote{\label{FootZabor}
\by{O.\,Zaboronski} (June 2026) $\consta(p)$ as the solution to a counting problem.
IH\'ES, 7~p. (work in progress), unpublished.}
%%%
Every such domain contributes with its toolkit and intuition, resulting so far in Claim~\ref{ClaimPositiveGrows} about basic properties of $\consta(p)$
(see p.~\pageref{ClaimPositiveGrows} below) 
and in Claim~\ref{ClaimAsympt} (on p.~\pageref{ClaimAsympt} below) about the asymptotic growth law for $\consta(p)$ as $p\to +\infty$. The latter claim itself relies on the findings which we report in this paper; on our part, we observe the predicted growth rate on the argument values $p\gtrsim 10$, cf.\ Table~\ref{TabBestFit} on p.~\pageref{TabBestFit}.
Furthering these lines of research will be the object of future work.\\
\centerline{\rule{1in}{0.7pt}}
\end{remNo}

%\smallskip
%\textbf{\textit{Perspectives.}}%Perspectives.
%\quad
  % N odd; base dimension d > 1.

\noindent%
The rest of this paper is structured as follows. In the next %immediately following 
section %, `Preliminaries', 
we show how the new integer sequence
$\consta(p)$ (\seqnum{A392714}) naturally arises from the
alternating of $N = 2p$ weighted differential operators of strict order $p$
on the real line.
In Table~\ref{tab:intro} on p.~\pageref{tab:intro} we report the previously known values $\consta(1) = 1$, $\consta(2) = 2$, and
$\consta(3) = 90$, and we recall some useful properties of the
integer sequence $\{\consta(p)\}_{p \in \mathbb{N}}$; these have been
established independently by Zaboronski${}^{\text{\ref{FootZabor}}}$ %(private communication) 
using techniques about large deviation models, standard in probability theory and
extremal combinatorics.
Then, in this paper we report as many values of $\consta(p)$ as
possible, currently attaining $p = 18$; we focus on accelerating every algorithm %of
for their computation.

Specifically, in Section~\ref{sec:monomials} we restrict the weights of the
differential operators on $\mathbb{R}^1$ to the monomials
$1, x, \ldots, x^{N-1}$, effectively transforming a problem of analysis
into arithmetic and combinatorics within the permutation group $S_{2p}$.
This is where the falling factorials appear. In Section~\ref{sec:phip} we
notice that after the monomial reduction, only a tiny fraction of
permutations contribute to the values $\consta(p)$; we recognize the
actually contributing set $\Phi_p \subset S_{2p-1} \subsetneq S_{2p}$ as the late\/-\/growing
permutations, and we provide the closed formula, Eq.~\eqref{eq:closedform}, 
for $\consta(p)$ in terms of the falling factorials and with alternated summation over
$\Phi_p$. We also discover a curious property of the set $\Phi_p$,
as seen from the last column of Table~\ref{tab:phip} on p.~\pageref{tab:phip};
we prove it for all $p\in\BBN$ in our subsequent paper~\cite{OddEven}.

The idea of the currently deployed algorithm for %the 
computation of $\consta(p)$ --\,more recent compared with the old algorithm, based
on calculations in the permutation group algebra (see
\cite{ShahKiselev2026})\,-- is outlined in Section~\ref{sec:algorithms} by
using pseudo-code; in Example~\ref{ex:peq2} we illustrate it for $p = 2$
(giving $\consta(2) = 2$ in terms of the falling factorials). From
this moment onwards, we focus on accelerating every element of the
algorithm for the actual computation\footnote{\label{FootWhereProgCodes}
The resulting code package, which implements Algorithm~\ref{alg:deployed},
is publicly available \cite{ShahCode2026}.
 %and is shipped as ancillary files with the arXiv version of this paper.
For convenience, in Appendix~\ref{app:oldcode} we keep the old, short code
which refers to the permutation group algebra approach, i.e.\ to the
calculation of linear combinations of permutations.}
%%%
of $\consta(p)$ as $p$ grows.

The resulting values $\consta(p)$ for $1 \leqslant p \leqslant 18$ are
reported in Table~\ref{tab:values} on p.~\pageref{tab:values} in
Section~\ref{sec:results}, continued in Appendix~\ref{app:data}. For each
value of $p$, we report the prime factorisation, up to one composite cofactor for $p=13$ and $15\leqslant p\leqslant 18$.\footnote{\label{FootMaybePrime}
The observed growth rate,
$\log \consta(p) \approx \alpha\, p^2 \log p + \beta\, p^2 + \cdots$
with $\alpha \approx 2$ and $\beta \approx -2.67$,
is so fast --\,and the structure of the prime decomposition of the integers
$\consta(p)$ is so peculiar (small primes in high powers and very
few big %huge 
primes in small powers)\,-- that it is currently impossible to
know the \emph{largest} prime factor for the values $\consta(p=13)$ and $\consta(p\geqslant 15)$; from the factorisation of $\consta(p=14)$ we extract a new
%, previously unknown
206-\/decimal\/-\/digit prime number.%
%assure that all the known factors are truly prime.
}
%%%
In our subsequent work~\cite{ShahKiselev2026} we explore these $p$-adic valuations of the integers $\consta(p)$, and we match the observed growth rate with the
predicted asymptotic behaviour of $\consta(p)$ as $p \to +\infty$ (by
Zaboronski${}^{\text{\ref{FootZabor}}}$%, work in progress
); in particular, Zaboronski confirms
the leading order value $\alpha = 2$ and gives a sharp bound for the
sub-leading coefficient, $\beta \leqslant -2.6744$.

We conclude this text with a set of open problems; we mention the immediate
next tasks and questions, and we outline possible directions of further
research.

\begin{convention}
Throughout this paper, we adopt the one-line notation for permutations, in
which $\sigma = (\sigma(1), \sigma(2), \ldots, \sigma(N))$ records the
image at %of 
each position. That is, $\sigma(k)$ denotes the index of the
object occupying position $k$ after the permutation is applied. Under this
convention, the identity element of $S_N$ is represented as
$\sigma = (1, 2, \ldots, N)$. For instance, in $S_4$, the identity
$\sigma = (1,2,3,4)$ corresponds to the map
$%\[
  (1,\, x,\, x^2,\, x^3) \;\longmapsto\; (1,\, x,\, x^2,\, x^3)
$,%\]
while $\sigma = (4,1,2,3)$ corresponds to
$%\[
  (1,\, x,\, x^2,\, x^3) \;\longmapsto\; (x^3,\, 1,\, x,\, x^2)$.
%\]
\end{convention}

\section{Preliminaries}\label{sec:prelim}
\noindent%
Let $p \in \mathbb{Z}_{\geqslant 1}$, and set $N = 2p$; let $w_1$,\ $\ldots$,\ $w_N$
be the coefficients of differential operators of strict order $p$ on an %the
interval $I \subseteq \mathbb{R}$ with an affine coordinate~$x$. For a smooth
function $f \in C^\infty(\mathbb{R})$, define the \emph{alternating
composition} of these differential operators:
\begin{equation}\label{eq:opdef}
  \mathcal{A}_p[w_1,\ldots,w_N](f)(x) \mathrel{{:}{=}}
  \sum_{\sigma \in S_N} (-1)^\sigma\,
  w_{\sigma(1)}\partial_x^p \circ w_{\sigma(2)}\partial_x^p
  \circ \cdots \circ w_{\sigma(N)}\partial_x^p\,(f(x));%,
\end{equation}%where 
the sum runs over all permutations $\sigma$ of the set
$\{1,\ldots,N\}$ and $(-1)^\sigma$ denotes the sign of~$\sigma$.

\begin{theor}[{\cite{Kiselev2005AssociativeWronskians,Kiselev_2025}}]\label{thm:main}
For every smooth function $f$ and all weight functions $w_1, \ldots, w_N$,
\begin{equation}\label{eq:main}
  \mathcal{A}_p[w_1,\ldots,w_N](f)(x)
  = \consta(p) \cdot
  \Wronsk \bigl(w_1, \ldots, w_N\bigr)(x) \cdot \partial_x^p(f(x)),
\end{equation}
where $\consta(p)$ is a universal constant, independent of $f$ and
of the weight functions.
\end{theor}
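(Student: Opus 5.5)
Expanding the composition by the Leibniz rule, the alternating sum will be written as a differential operator $\sum_{k} a_k(x)\,\partial_x^k$ whose coefficients are polynomial expressions in the weights and their derivatives. I would then pin down each coefficient $a_k$ using three structural facts. (i) The total order of derivatives. Moving $\partial_x^p$ of the $j$th factor to the right distributes $p$ derivatives over $w_{\sigma(j+1)},\ldots,w_{\sigma(N)}$ and $f$. Hence each monomial in the coefficient of $\partial_x^k$ contains exactly $Np-k$ derivatives falling on the weights in total. (ii) Antisymmetry: $\mathcal{A}_p$ is totally skew-symmetric in $w_1,\ldots,w_N$. (iii) Linearity: $\mathcal{A}_p$ is multilinear in $(w_1,\ldots,w_N)$, and every monomial contains each $w_i$ exactly once, in the form $w_i^{(m_i)}$.

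By (ii) and (iii), the coefficient $a_k$ is a skew-symmetrization of products $\prod_i w_i^{(m_i)}$. Every such skew-symmetrization with some $m_i=m_{i'}$ for $i\neq i'$ cancels. So only multi-indices with pairwise distinct orders $m_1,\ldots,m_N$ survive, and then $\sum_i m_i\geqslant 0+1+\cdots+(N-1)=N(N-1)/2=p(2p-1)$.

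Combining this with (i) gives $Np-k=2p^2-k\geqslant 2p^2-p$, that is $k\leqslant p$. On the other hand, the rightmost factor $w_{\sigma(N)}\partial_x^p$ forces every term to carry at least $p$ derivatives on $f$, so $k\geqslant p$. Thus only $k=p$ survives.

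For $k=p$ the inequality is sharp, so $\{m_i\}=\{0,1,\ldots,N-1\}$ exactly. The surviving expression is therefore a constant multiple of $\sum_{\tau}(-1)^\tau\prod_i w_i^{(\tau(i)-1)}=\Wronsk(w_1,\ldots,w_N)$. The sign and the normalization of the Wronskian are absorbed into $\consta(p)$.

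The main point to check carefully is that the resulting constant is truly universal, i.e.\ independent of $f$, $x$, and the weights. Leibniz expansion yields coefficients that are integer combinations of binomial products depending only on $p$ and on the placement pattern of derivatives, never on the functions themselves. Hence the coefficient in front of each $\prod_i w_i^{(m_i)}\cdot\partial_x^p f$, after alternation, is a fixed integer, and skew-symmetry forces all of these to be proportional to a single one, which I call $\consta(p)\in\BBZ$.

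A subtle step is the cancellation claim: the coefficient of $\prod_i w_i^{(m_i)}$ in the sum over $\sigma$ must transform with the sign under relabelling the weights. I would verify this by noting that $\mathcal{A}_p[w_{\pi(1)},\ldots,w_{\pi(N)}]=(-1)^\pi\mathcal{A}_p[w_1,\ldots,w_N]$, which follows from reindexing $\sigma\mapsto\sigma\circ\pi$. Swapping two weights carrying equal derivative orders then fixes the monomial while flipping the sign, which kills it.
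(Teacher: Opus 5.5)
Your proposal is correct and follows essentially the same route as the paper's proof in Appendix~\ref{app:proof}: the total derivative count $Np=2p^2$ (Lemma~\ref{lem:count}), antisymmetry forcing pairwise distinct weight orders (Lemma~\ref{lem:antisym}), the budget bound $\sum_i m_i\geqslant p(2p-1)$ giving $k\leqslant p$, and $k\geqslant p$ from the innermost factor. Sharpness then forces the profile $\{0,\ldots,N-1\}$ and hence the Wronskian; your explicit observation that skew-symmetry relates the coefficients of all assignments of this profile by signs is, if anything, slightly more careful than the paper's final step.
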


%Sketch of the proof.

The proof of this theorem is outlined in Appendix~\ref{app:proof}. The
problem which we study in this paper is the calculation of the constant's
values $\consta(p)$ in identity~\eqref{eq:main}.

\begin{lemma}\label{lem:order}
In every nonvanishing term of \eqref{eq:opdef}, the function $f$ appears
differentiated to precisely order $p$. Consequently, $\partial_x^p(f)$
factors out of the entire alternating sum, and \eqref{eq:main} reduces to
the following equation:
\begin{equation}\label{eq:reduced}
  \sum_{\sigma \in S_N} (-1)^\sigma\, w_{\sigma(1)}\partial_x^p \circ
  w_{\sigma(2)}\partial_x^p \circ \cdots \circ w_{\sigma(N)}
  \;=\; \consta(p) \cdot \Wronsk(w_1, \ldots, w_N).
\end{equation}
\end{lemma}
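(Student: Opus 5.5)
The plan is a degree count on the Leibniz expansion of each composition in \eqref{eq:opdef}, followed by an antisymmetry argument that forces all weights to carry pairwise distinct orders of differentiation.

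\emph{Leibniz expansion.} Fix $\sigma\in S_N$ and expand the composition $w_{\sigma(1)}\partial_x^p\circ\cdots\circ w_{\sigma(N)}\partial_x^p(f)$ by repeated use of the Leibniz rule, working from the innermost factor outwards. The result is a finite sum of monomials
\[
  C\cdot w_1^{(k_1)}w_2^{(k_2)}\cdots w_N^{(k_N)}\cdot f^{(m)},\qquad C\in\BBZ_{\geqslant 0}.
\]
The operator $\partial_x^p$ is applied $N$ times, and each application distributes exactly $p$ derivatives among the factors to its right. Hence every monomial satisfies the homogeneity constraint $k_1+\cdots+k_N+m=Np$. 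Two further facts follow from the structure of the composition. First, $w_{\sigma(1)}$ is never differentiated. Second, $m\geqslant p$, because the innermost $\partial_x^p$ acts on $f$ alone. The integer coefficients $C$ depend only on the position of each factor in the word, not on the functions themselves.

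\emph{Antisymmetry.} Collect the terms with a fixed order $m$ of $f$ after summing over $S_N$ with signs. The coefficient of $f^{(m)}$ is a multilinear differential polynomial $P_m(w_1,\ldots,w_N)$, and it is totally antisymmetric in its arguments, since relabelling the weights by a transposition changes the sign of every summand. Suppose a monomial $\prod_j w_j^{(k_j)}$ has $k_i=k_j$ for some $i\neq j$. The transposition $(ij)$ maps this monomial to itself, so antisymmetry forces its coefficient to equal its own negative, and the coefficient vanishes. Hence every surviving monomial has pairwise distinct orders $k_1,\ldots,k_N\geqslant 0$. The smallest possible value of their sum is
\[
  0+1+\cdots+(N-1)=\frac{N(N-1)}{2}=p(2p-1).
\]

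\emph{Conclusion.} Combining this with the homogeneity constraint gives $m=Np-\sum_j k_j\leqslant 2p^2-p(2p-1)=p$. Together with $m\geqslant p$, this yields $m=p$ in every nonvanishing term, which is the first claim of the lemma. So $\partial_x^p f$ factors out of the whole alternating sum. Moreover, equality $\sum_j k_j=p(2p-1)$ holds exactly, so $\{k_1,\ldots,k_N\}=\{0,\ldots,N-1\}$. The surviving coefficient is therefore an antisymmetrisation of the single monomial $w_1w_2'\cdots w_N^{(N-1)}$, that is, an integer multiple of $\det\bigl(w_j^{(i-1)}\bigr)=\Wronsk(w_1,\ldots,w_N)$. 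This is the reduced identity \eqref{eq:reduced}, and it is consistent with Theorem~\ref{thm:main}.

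\emph{Main obstacle.} The only delicate step is making the antisymmetry argument rigorous at the level of monomials. One must check that the $S_N$-action on labels permutes the monomials of the expansion, that the integer coefficients are label-independent, and that the left-hand side of \eqref{eq:reduced} is read as the coefficient of $\partial_x^p$ in the composed operator. The Leibniz bookkeeping itself is routine.
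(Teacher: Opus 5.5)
Your proposal is correct and follows essentially the same route as the paper's argument in Appendix~\ref{app:proof}. Both use the derivative count $Np=2p^2$ (Lemma~\ref{lem:count}) and the lower bound $m\geqslant p$ from the innermost $\partial_x^p f$, then antisymmetry (Lemma~\ref{lem:antisym}) forcing pairwise distinct $k_i$ and hence $\sum_i k_i\geqslant p(2p-1)$ (Corollary~\ref{cor:orderp}), and finally the unique profile $\{0,\ldots,N-1\}$, whose antisymmetrisation gives the Wronskian; your explicit points that cancellation happens only after collecting like terms over $S_N$ and that relabelling must permute the formal monomials make precise what the paper leaves implicit.
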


For small values $p \leqslant 3$ we verify identity \eqref{eq:main} by hand: we do so for $p = 1$, whence $\consta(1) = 1$, and for $p = 2$, whence we get
$\consta(2) = 2$ (cf.\ Example~\ref{ex:peq2} below). 
For $p = 3$, using \textsf{Maple} (e.g., in the package \textsf{Jets} for differential calculus% on diffieties
~\cite{BaranMarvan_Jets,Marvan2009}), 
we expand the left\/-\/hand side of \eqref{eq:main} by taking the entire sum over
$6! = 720$ permutations (here $N = 2 \cdot 3 = 6$ and all differential
operators have order $p = 3$); we find $\consta(3) = 90$. The values
of $\consta(p)$ for small $p$ are summarized in Table~\ref{tab:intro}.\footnote{\label{FootMapleCode123}
The plain \textsf{Maple} code to compute $\consta(p)$ up to $p=3$ is 
contained in %available from 
Appendix~\ref{app:maplecode} on p.~\pageref{app:maplecode}.}
%%%
\begin{table}[htb]
\caption{Previously known values of $\consta(p)$ for $p = 1, 2, 3$.
The case $p = 1$ recovers the Lie bracket of two vector fields. Extended
data appear in Table%s~\ref{tab:phip} and
~\ref{tab:values} on p.~\pageref{tab:values} and in
Appendix~\ref{app:data}.}\label{tab:intro}
\centering
\renewcommand{\arraystretch}{1.4}
\begin{tabular}{ccl}
\hline\hline
$p$ & $N!$ & $\consta(p)$ \\
\hline
$1$ & $2$ & $1$ \quad (Lie) \\
$2$ & $24$ & $2$ \\
$3$ & $720$ & $90$ \\
\hline\hline
\end{tabular}
\end{table}
%%%%%%%%%%%%%%%%%%%
  %%% MAPLE program code for JETS ?

\begin{claim}[O.\,Zaboronski%, private communication
${}^{\text{\protect\ref{FootZabor}}}$]\label{ClaimPositiveGrows}
(\textit{i}) %\quad 
The universal constants $\consta(p)$ in equality~\eqref{eq:main} are positive: $\consta(p)>0$ for all $p\in\BBN_{\geqslant 1}$.\\[0.5pt]
%%%
(\textit{ii}) %\quad  
The integer sequence $\bigl\{ \consta(p) \bigr\}_{p\in\BBN}$ %is monotonically growing
grows monotonically: $1 \leqslant \consta(p) < \consta(q)$ if $1\leqslant p < q$.
\end{claim}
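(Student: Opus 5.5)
The plan is to work with the monomial specialisation and prove both parts of Claim~\ref{ClaimPositiveGrows} for the resulting closed expression, which by Theorem~\ref{thm:main} equals $\consta(p)$. Take $w_j=x^{j-1}$ and $f=x^p/p!$, so that $\partial_x^p f=1$. The Wronskian of $1,x,\ldots,x^{N-1}$ is the constant $\prod_{k=0}^{N-1}k!$, which equals the Vandermonde $V(0,1,\ldots,N-1)$. The operator $x^{a}\partial_x^p$ sends $x^{m}$ to $(m)_p\,x^{m-p+a}$, where $(m)_p$ is the falling factorial. Reading the composition in \eqref{eq:opdef} from the right, the total degree therefore drops to $p+p(2p-1)-2p^2=0$, and we obtain
\[
 \consta(p)\,V(0,\ldots,N-1)=\sum_{\sigma\in S_N}(-1)^\sigma F\bigl(t_{\sigma(1)},\ldots,t_{\sigma(N)}\bigr)\Big|_{t_i=i-1},
\]
where $F(t_1,\ldots,t_N)=\prod_{k=1}^{N}\bigl(p+\sum_{i>k}(t_i-p)\bigr)_p$.

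\emph{Step 1: reduction to a divided difference.} The right-hand side is the antisymmetrisation of the polynomial $F$. Dividing by the Vandermonde gives the symmetric polynomial $\partial_{w_0}F$, the top divided difference. So $\consta(p)$ is the value of $\partial_{w_0}F$ at $t=(0,1,\ldots,N-1)$. Theorem~\ref{thm:main} says that $\consta(p)$ does not depend on the weights; taking the shifted weights $x^{c}\cdot x^{j-1}$ (all exponents shifted by $c$) should therefore force $\partial_{w_0}F$ to be constant on the diagonal shifts of this point. I will use this shift invariance, together with homogeneity, to discard the lower-order terms of the falling factorials. The goal is to show that $\consta(p)=\partial_{w_0}F^{\rm top}$, where $F^{\rm top}=\prod_{k}\ell_k^{\,p}$ and $\ell_k=\sum_{i>k}t_i-(N-k-1)p$ is a product of linear forms. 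If exact discarding fails, I would instead keep the full falling factorials and rewrite each $(\cdot)_p$ as a product of $p$ shifted linear forms, which is equally suited to Step~2.

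\emph{Step 2: positivity (i).} Under the Borel presentation $H^*(F\ell_N)=\mathbb{Z}[t_1,\ldots,t_N]/(\text{symmetric})$, the operator $\partial_{w_0}$ applied to a degree-$\binom N2$ class is the integral over $F\ell_N$. There is a degree mismatch: $\deg F^{\rm top}=2p^2$ exceeds $\binom N2$. So I first want to extract the correct homogeneous component, which should be one of the form $\prod_k \ell_k^{\,e_k}$. With $-t_i$ interpreted as Chern roots of the tautological quotient line bundles, each partial sum $\sum_{i>k}t_i$ (up to sign) is the first Chern class of a pullback of $\mathcal{O}(1)$ from a Grassmannian, which is globally generated and hence nef. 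An intersection number of nef classes is $\geqslant 0$. Strict positivity would follow by exhibiting one nonzero Schubert-expansion term: expand $\prod\ell_k^{e_k}$ by Monk's rule, note that all coefficients are nonnegative, and check that the point class occurs. This is the point where the paper's remark about flag manifolds and Zaboronski's interpretation enters, and it is where I would commit the most detail.

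\emph{Step 3: monotonicity (ii), the main obstacle.} There is no obvious map between $F\ell_{2p}$ and $F\ell_{2q}$ that compares these intersection numbers. My first attempt is to prove the stronger inequality $\consta(p+1)\geqslant \consta(p)\cdot M_p$ with an explicit $M_p>1$, as follows. Embed $F\ell_{2p}\times F\ell_{2}$ as a fibre (or as the fixed locus of a torus action) inside $F\ell_{2p+2}$. Restrict the nef classes $\ell_k$ to this locus and use the fact that, for nef classes, the full intersection number dominates any product of restricted intersection numbers obtained via a degeneration or Khovanskii--Teissier type inequality. If this degeneration argument does not go through, the fallback is purely combinatorial. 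Monk-rule positivity writes $\consta(p)$ as a count of saturated chains in Bruhat order with positive weights. I would then construct an injection from the chains for $p$ into those for $p+1$, and verify that it is not surjective already at $p=1$, where $\consta(1)=1<2=\consta(2)$.
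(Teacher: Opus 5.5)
The paper gives no proof of this claim. It is quoted from Zaboronski's unpublished note, and the paper only mentions his flag-manifold interpretation in the Remark. Your proposal therefore has to stand on its own.

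\textbf{Part (i).} Your route is the right one, but Step~1 rests on a miscount and Step~2 stops short of strict positivity.

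\emph{Step 1.} The factor $k=N$ of your $F$ is the constant $(\ell_N)_p=(p)_p=p!$. With $f=x^p/p!$ this factor cancels, so your displayed identity is off by $p!$. More importantly, it means $\deg F=(N-1)p=\binom{N}{2}$ exactly. There is no degree mismatch, and you need neither shift invariance nor an unspecified component $\prod\ell_k^{e_k}$: $\partial_{w_0}$ kills everything below top degree. As you wrote it, $\partial_{w_0}$ of a degree-$2p^2$ form would be a polynomial of degree $p$, not a number. With $y_i=t_{N+1-i}$ one gets
$\consta(p)=\partial_{w_0}\prod_{m=1}^{N-1}(y_1+\cdots+y_m)^p=\int_{F\ell_N}\prod_{m=1}^{N-1}D_m^{\,p}$,
where $D_m$ is the Schubert divisor pulled back from $\mathrm{Gr}(m,N)$, so every exponent is $p$.

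\emph{Step 2.} Monk's rule gives $\consta(p)\geqslant 0$ at once. Strict positivity, however, requires an actual chain, and you do not exhibit one. One exists: the odd--even transposition-sort word $(s_1s_3\cdots s_{2p-1}\,s_2s_4\cdots s_{2p-2})^p$ is a reduced word of $w_0\in S_{2p}$ of length $p(2p-1)$ in which each $s_m$ occurs exactly $p$ times. Multiplying the divisors in that order gives a Monk path to the point class, so $\consta(p)\geqslant 1$.

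\textbf{Part (ii).} This is the genuine gap: Step~3 contains no argument.
\begin{enumerate}
\item Khovanskii--Teissier inequalities compare mixed intersection numbers on a single variety. Here they give nothing, because each $D_m$ comes from $\mathrm{Gr}(m,N)$, so $D_m^{\dim F\ell_N}=0$ for $N\geqslant 3$.
\item Localization to a torus-fixed component produces signed Euler-class contributions, not termwise lower bounds.
\item There is no general principle that a full intersection number of nef classes dominates restricted ones.
\item The chain injection is not constructed. Even if it were, non-surjectivity at $p=1$ would not give strictness for other $p$.
\end{enumerate}

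What can make your fibre idea rigorous is effectivity rather than Khovanskii--Teissier.
\begin{enumerate}
\item On $F\ell_{2p+2}$, write $\prod_{m}D_m^{p+1}=\alpha\cdot\Gamma$ with $\alpha=\prod_{m<2p}D_m^{p}\cdot D_{2p+1}$ and $\Gamma=\prod_{m<2p}D_m\cdot D_{2p}^{p+1}D_{2p+1}^{p}$.
\item Expand $\Gamma$ in Schubert classes; the Monk coefficients are nonnegative.
\item Keep only the class of a fibre $Z\cong F\ell_{2p}\times\mathbb{P}^1$ of $F\ell_{2p+2}\to\mathrm{Gr}(2p,2p+2)$. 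On $Z$, $\alpha$ restricts to the integrand of $\consta(p)$ times the point class of $\mathbb{P}^1$.
\end{enumerate}
Since $\alpha$ is a product of nef divisors, it integrates nonnegatively over every Schubert variety. This gives $\consta(p+1)\geqslant c_p\,\consta(p)$, where $c_p$ is the Monk coefficient of $[Z]$ in $\Gamma$; one checks $c_1=2$. It would then suffice to prove $c_p\geqslant 2$ for all $p$. Until an argument of this kind is carried out, monotonicity remains unproved.
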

   %More about the asymptotic behaviour: later on p.~...;

However simple be the definition of $\consta(p)$, its calculation at
$p = 4$ becomes infeasible using \textsf{Maple}
(in particular, \textsf{Jets}) if one still tries a straightforward
expansion of both sides in \eqref{eq:main} or \eqref{eq:reduced}, then
solving for the proportionality constant. To learn
$\consta(p \geqslant 4)$, we need a new, more economical strategy.

\section{Combinatorics of the monomial case}\label{sec:monomials}
\noindent%
To make the right\/-\/hand side of~\eqref{eq:reduced} shorter and extract the
constants $\consta(p)$ explicitly, we choose specific weight
functions, namely, let them be monomials:
\begin{equation}\label{EqMonomWeight}
  w_n(x) = x^{n-1}, \qquad n = 1, \ldots, N.
\end{equation}

\begin{rem}
Note that $\deg(w_{\sigma(k)}) = \sigma(k) - 1$ whenever $w_k = x^{k-1}$.
\end{rem}

\begin{lemma}\label{lem:wronskian}
The Wronskian of $1, x, x^2, \ldots, x^{N-1}$ equals
\[
  \Wronsk(1,\, x,\, x^2,\, \ldots,\, x^{N-1})
  \;=\; \prod_{k=0}^{N-1} k!\,.
\]
\end{lemma}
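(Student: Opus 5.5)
The plan is to write the Wronskian matrix explicitly and observe that it is upper triangular. By definition,
\[
  \Wronsk(w_1,\ldots,w_N)=\det\bigl(\partial_x^{\,i}w_{j+1}\bigr)_{0\leqslant i,j\leqslant N-1},
\]
with row index $i$ the order of the derivative and column index $j$ labelling the weight $w_{j+1}=x^{j}$. The entries are falling factorials,
\[
  \partial_x^{\,i}x^{j}=j(j-1)\cdots(j-i+1)\,x^{j-i},
\]
and these vanish for $i>j$. So the matrix is upper triangular, and its diagonal entries are $\partial_x^{\,j}x^{j}=j!$, which are constant in $x$.

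The determinant of a triangular matrix is the product of its diagonal entries. This gives $\prod_{k=0}^{N-1}k!$ identically in $x$, which is the claim. It also shows the Wronskian is a nonzero constant.

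There is no real obstacle. The only point needing care is the convention: one should confirm that the paper's Wronskian uses rows indexed by derivative order $0,\ldots,N-1$ and columns by the functions, with no sign or normalisation. Transposing does not change the determinant, so either orientation gives the same value.

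For $N=2p$, the result feeds directly into \eqref{eq:reduced}. The constant $\consta(p)$ is then the $x$-independent value of the left-hand side, evaluated on monomial weights and divided by $\prod_{k=0}^{2p-1}k!$.
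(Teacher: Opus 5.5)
Your proof is correct and is essentially the paper's own argument: the Wronskian matrix of the monomials is triangular with diagonal entries $0!,1!,\ldots,(N-1)!$, so its determinant is their product. Your orientation caveat is apt. The paper's appendix defines the Wronskian as $\det\bigl[\partial_x^{j-1}(w_i)\bigr]$ with rows indexed by the functions, which is the transpose of your matrix and gives the same determinant.
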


\begin{proof}%We note that 
The Wronskian matrix of $1, x, \ldots, x^{N-1}$ is upper
triangular, with diagonal entries $0!$,\ $1!$,\ $2!$,\ $\ldots$,\ $(N-1)!$. Thus, the
determinant is the product of these entries.
\end{proof}

To understand the action of the iterated operators in \eqref{eq:opdef} in
the monomial case, consider two monomials $x^\alpha$ and $x^\beta$ with
$\alpha, \beta \in \mathbb{Z}_{\geqslant 0}$. The operator $x^\beta\partial_x^p$
acts on $x^\alpha$: whenever $\alpha < p$ the result is zero, otherwise
\begin{equation}\label{eq:opaction}
  x^\beta\,\partial_x^p\,(x^\alpha)
  \;=\; x^\beta\cdot\frac{\alpha!}{(\alpha-p)!}\, x^{\alpha-p}
  \;=\; \frac{\alpha!}{(\alpha-p)!}\, x^{\alpha+\beta-p}.
\end{equation}
Each application of the weighted operator $x^\beta\partial_x^p$, when
nonzero in the output, multiplies the argument $x^\alpha$ by the \textsl{falling
factorial}, for which the notation is
\[
  \alpha^{\underline{p}}
  = \frac{\alpha!}{(\alpha-p)!}
  = \alpha(\alpha-1)\cdots(\alpha-p+1);
\]
also, the operator $x^\beta\partial_x^p$ shifts the exponent by
$\alpha \mapsto \alpha + \beta - p$.

\begin{rem}
The vanishing condition $\alpha < p$ immediately implies that not every
permutation $\sigma \in S_N$ contributes to the alternating sum in~\eqref{eq:reduced}. Any permutation for which the innermost weight function
$w_{\sigma(N)} = x^{\sigma(N)-1}$ satisfies $\sigma(N) - 1 < p$ yields zero
contribution; clearly, such a permutation may be discarded at the outset.
\end{rem}

\begin{ex}\label{ex:filter}
Let $p = 2$ and $N = 4$, so that the weight functions are
$\{1, x, x^2, x^3\}$. Consider the permutation $\sigma = (3,1,4,2)$,
corresponding to
$(1,\, x,\, x^2,\, x^3) \mapsto (x^2,\, 1,\, x^3,\, x)$,
so that $\sigma(4) = 2$ and the innermost operator acts as
$\partial_x^2(w_2) = \partial_x^2(x) = 0$. The entire term arising from
this permutation thus vanishes, and this $\sigma$ does not contribute to the
sum. More generally, any permutation with $\sigma(N) \leqslant p$ is eliminated
by this filter; for $p = 2$, $N = 4$ this removes all $\sigma$ with
$\sigma(4) \in \{1, 2\}$, reducing the number of permutations to be
evaluated from $4! = 24$ to those with $\sigma(4) \in \{3, 4\}$.
\end{ex}

As illustrated by the above example, we can now shift our focus %ENG.
from the monomial weight functions to the permutations themselves; from now on, we
work with permutations directly.

\begin{lemma}\label{lem:sigma1}
For every contributing permutation $\sigma \in S_N$, we have
$\sigma(1) = 1$\textup{;} that is, the weight function $w_{\sigma(1)} = w_1 = 1$ is
necessarily fixed in the leading leftmost position.
\end{lemma}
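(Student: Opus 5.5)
Lemma~\ref{lem:sigma1} follows from a degree count: track the exponent of $x$ through the composition in~\eqref{eq:reduced} with the monomial weights~\eqref{EqMonomWeight}, and compare it with the right\/-\/hand side. By Lemma~\ref{lem:wronskian}, the right\/-\/hand side $\consta(p)\cdot\Wronsk(1,x,\ldots,x^{N-1})=\consta(p)\prod_{k=0}^{N-1}k!$ is a constant, i.e.\ a monomial of degree $0$ in~$x$. So I will compute the degree of the monomial produced by an arbitrary nonvanishing term of the left\/-\/hand side and show it equals $0$ exactly when $\sigma(1)=1$.

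First step: apply~\eqref{eq:opaction} iteratively, from the innermost factor $w_{\sigma(N)}=x^{\sigma(N)-1}$ outwards. Each application of $x^{\beta}\partial_x^p$ to a nonzero monomial either kills it or multiplies it by a falling factorial and shifts the exponent by $\beta-p$. Hence, if the term survives, the resulting monomial has exponent
\[
  \bigl(\sigma(1)-1\bigr)+\sum_{k=2}^{N}\bigl(\sigma(k)-1\bigr)-(N-1)p
  \;=\;\sum_{k=1}^{N}(k-1)-(N-1)p
  \;=\;\frac{N(N-1)}{2}-(N-1)p .
\]
Note that the outermost weight $w_{\sigma(1)}$ is only multiplied, not differentiated, in~\eqref{eq:reduced}. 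Since $N=2p$, this total equals $(N-1)(p-p)=0$, independently of~$\sigma$. So the degree alone does not single out $\sigma(1)=1$; the constraint must come from nonvanishing.

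Second step: locate where a nonzero term could die. Write $e_j$ for the exponent after the $j$ innermost factors have been applied. Then $e_1=\sigma(N)-1$, and $e_{j+1}=e_j-p+\sigma(N-j)-1$, provided $e_j\geqslant p$. The final exponent is $e_N=0$, and the last step produces $e_N=e_{N-1}-p+\sigma(1)-1$ with the requirement $e_{N-1}\geqslant p$. Hence $\sigma(1)-1=p-e_{N-1}\leqslant 0$, which forces $\sigma(1)=1$ and $e_{N-1}=p$. This is the whole argument: for a contributing term, the derivative applied just before the outermost weight must act on a monomial of degree at least~$p$, whereas the total degree bookkeeping leaves room for only $p-(\sigma(1)-1)$ there.

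The main obstacle is making sure the bookkeeping is consistent with the paper's reading of~\eqref{eq:reduced}, in which the outermost weight carries no derivative onto anything further. Equivalently, in~\eqref{eq:opdef} the outermost $\partial_x^p$ falls on $f$, and Lemma~\ref{lem:order} has already extracted it. I will state explicitly that ``contributing'' means the iterated product is nonzero, so every intermediate condition $e_j\geqslant p$ holds. The inequality at the final stage is then immediate.
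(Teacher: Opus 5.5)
Your argument is correct, and it rests on the same observation as the paper's proof. The leftmost weight $x^{\sigma(1)-1}$ is never differentiated and multiplies a monomial of nonnegative degree (your $e_{N-1}-p\geqslant 0$), so a surviving term has degree at least $\sigma(1)-1$.

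The difference lies in where the constraint ``degree $=0$'' comes from. The paper takes it from the right\/-\/hand side of~\eqref{eq:reduced}, the constant $\consta(p)\prod_{k=0}^{N-1}k!$ of Lemma~\ref{lem:wronskian}. Read literally, that comparison only shows that the terms with $\sigma(1)\neq 1$ cancel in aggregate. That suffices for Proposition~\ref{prop:closedform}, but it is weaker than the claim about \emph{every} contributing~$\sigma$. You instead compute the exponent intrinsically: $\sum_{k=1}^{N}(\sigma(k)-1)-(N-1)p=(N-1)(p-p)=0$ for every surviving term. This turns the degree bound into a term\/-\/by\/-\/term contradiction. Your argument never looks at the right\/-\/hand side, and it yields $e_{N-1}=E_{N-1}=p$ as a by\/-\/product, which the paper records only later as $E(\varnothing)=p$.

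Two small clean\/-\/ups are worth making. First, drop the opening plan to show that the degree ``equals $0$ exactly when $\sigma(1)=1$''; your own first step refutes it. Second, in your last paragraph replace ``outermost'' by ``innermost''. In~\eqref{eq:opdef} it is the $\partial_x^p$ of $w_{\sigma(N)}\partial_x^p$ that falls on~$f$, and in~\eqref{eq:reduced} it is $w_{\sigma(N)}$ that carries no $\partial_x^p$; $w_{\sigma(1)}$ is merely never differentiated. Your recursion for $e_j$ already encodes the correct structure, so the argument itself is unaffected.
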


\begin{proof}
There are two kinds of permutations: the ones which keep the leftmost
monomial $w_1 = 1$ in its first place, and those which send it away, so
that a positive-degree monomial $x^{\alpha}$ becomes the leftmost. Suppose
the latter, so that it is this positive-degree monomial $x^{\alpha}$ which
itself is not differentiated, but is multiplied by a nonnegative-degree
monomial resulting from all the differentiations which stood to the right
of it. This yields a contribution of degree $\geqslant \alpha$ to the sum. But
we know that in the right-hand side of \eqref{eq:reduced}, the Wronskian
of $1, x, \ldots, x^{N-1}$ is a real number, i.e.\ it is of polynomial degree
zero. Therefore, every permutation sending $1$ away cannot contribute to
the left-hand side of \eqref{eq:reduced}.
\end{proof}

  %closed form for
\section{Late\/-\/growing permutations and a formula of
\texorpdfstring{$\consta(p)$}{const(p)}}
\label{sec:phip}

\noindent To identify all the permutations (from the full set $S_N$ of size $N!$)
that do contribute to the sum in the left-hand side of \eqref{eq:reduced},
we combine Lemma~\ref{lem:sigma1} with the exponent-tracking process of
formula \eqref{eq:opaction}, read from right to left.

\begin{define}\label{def:phip}
For $\sigma \in S_N$ define the partial sums
\begin{equation}\label{eq:Tk}
  T_k \;:=\; \sum_{i=N-k+1}^{N} \bigl(\sigma(i) - 1 - p\bigr),
  \qquad k = 1, \ldots, N-1.
\end{equation}
The quantity $T_k$ tracks whether the exponent of $x$ remains nonnegative
after each successive operator application: if $T_k < 0$ for some $k$, the
exponent becomes negative at that stage, and the term vanishes.\\[0.5pt]
$\bullet$\quad 
We denote by $\Phi_p$ the set of all permutations $\sigma \in S_N$ with
$\sigma(1) = 1$ and $T_k \geqslant 0$ for all $k = 1, \ldots, N-1$; these are
exactly the permutations that contribute to the left-hand side of
\eqref{eq:reduced}, and we claim that $\Phi_p \subsetneq S_N$.
\end{define}

For the special choice of monomial weights
$w_1 = 1, \ldots, w_N = x^{N-1}$ we therefore have
\[
  \sum_{\sigma \in S_N} (-1)^\sigma\, w_{\sigma(1)}\partial_x^p \circ
  w_{\sigma(2)}\partial_x^p \circ \cdots \circ w_{\sigma(N)}
  \;=\;
  \sum_{\sigma \in \Phi_p} (-1)^\sigma\,
  w_{\sigma(1)}\partial_x^p \circ w_{\sigma(2)}\partial_x^p
  \circ \cdots \circ w_{\sigma(N)};
\]
let us emphasize that the set $\Phi_p$ was constructed in reference to this
choice of weights~$\{w_j\}$.

\begin{ex}\label{ex:p1p2}
We illustrate the filtering condition for $p = 1$ and $p = 2$:
in fact, $\Phi_p \subsetneq S_{2p}$.

\textbf{Case $p = 1$, $N = 2$.} The group $S_2$ consists of two
permutations, and thus the alternating sum \eqref{eq:reduced} reads
\[
  \sum_{\sigma \in S_2} (-1)^\sigma\, w_{\sigma(1)}\partial_x \circ
  w_{\sigma(2)}.
\]
The two terms are as follows. For $\sigma = (1,2)$, we obtain the term
$1 \cdot \partial_x(x)$, which does contribute. For $\sigma = (2,1)$, we
obtain the term $x \cdot \partial_x(1)$, which does not contribute because
$\partial_x(1) = 0$. This is also consistent with Lemma~\ref{lem:sigma1}:
since $\sigma(1) = 2 \neq 1$, this permutation is discarded. Thus
\[
  \Phi_1 = \{(1,2)\} \subset S_2, \qquad
  |\Phi_1| = 1 = \tfrac{1}{2}\,|S_2|.
\]

\textbf{Case $p = 2$, $N = 4$.} The group $S_4$ has $4! = 24$ elements.
Applying Lemma~\ref{lem:sigma1} immediately reduces it to the $3! = 6$
permutations with $\sigma(1) = 1$. Of these, the nonnegativity condition on
the sums $T_k$ eliminates three more, leaving the valid set
\[
  \Phi_2 = \{(1,2,3,4),\ (1,2,4,3),\ (1,3,2,4)\} \subset S_4,
  \qquad |\Phi_2| = 3 = \tfrac{1}{8}\,|S_4|.
\]
The three discarded permutations with $\sigma(1) = 1$ are $(1,3,4,2)$,
$(1,4,2,3)$, and $(1,4,3,2)$; each of them produces a negative exponent at
some intermediate stage and therefore contributes a zero term to the sum.
\end{ex}

\begin{rem}\label{rem:idrev}
The permutations that are ``close to'' the identity permutation
$(1, 2, \ldots, N)$ contribute to the sum, while permutations that are
``close to'' the reversed order permutation $(N, N-1, \ldots, 2, 1)$ do
not contribute to the sum. Indeed, permutations near the identity present
the weight functions in basically increasing order of their degree, ensuring that
each intermediate exponent remains sufficiently large to survive $p$
differentiations. On the other hand, permutations near the reversed order
present lower\/-\/degree weights in the innermost operators, which has a higher
chance of driving the exponent below zero.
On the opening pages of our sequel~\cite{OddEven} to this text we make precise these intuitive ideas.
\end{rem}

\begin{rem}
It is clear that at $p=1$, the set $\Phi_1 \simeq S_1 = \{\text{id}\}$ amounts to just one permutation. Let us count their number in the larger sets~$\Phi_{p>1}$ at hand.
%%%
The sequence $|\Phi_p| = 1$,\ $3$,\ $35$,\ $1001$,\ $53109$,\ $4605271$,\ $\ldots$ for
$p = 1, 2, 3, 4, 5, 6$ coincides with the terms of odd index $N' = 2p-1$ of
the sequence \seqnum{A147681} in the {\it On-Line Encyclopedia of Integer
Sequences} \cite{oeisA147681}, which counts \emph{late-growing}
permutations\footnote{\label{FootDefLateGrow}
By definition, a permutation $\sigma \in S_n$ is \emph{late\/-\/growing} if its
partial sums never exceed the corresponding partial sums of the average
permutation, that is,
\[
  \frac{1}{k}\sum\nolimits_{i=1}^{k} \sigma(i) \;\leqslant\; \frac{n+1}{2}
  \qquad \text{for all $k \leqslant n$.}
\]
The bound on the right\/-\/hand side is the arithmetic mean of the numbers
$1$,\ $\ldots$,\ $n$; thus the ``average permutation'' has partial sums
$k(n+1)/2$ as $k\geqslant 1$ grows, so dividing by $k$ gives the stated inequality.} 
%%%
of $\{1, \ldots, n\}$. 
%%%
Some statistics about the structure of the sets $\Phi_p$ is summarized in Table~\ref{tab:phip}. %on p.~\pageref{tab:phip}.
We discover that as $p$ grows, our strategy to restrict the summation to
the subset $\Phi_p \subseteq S_{N-1} \subsetneq S_N$ becomes more and more effective. %the denominators in the approximations $|\Phi_p|/N!$ grow monotonically.
%%%
\begin{table}[htb]
\caption{For each $p$, the size of the contributing set
$\Phi_p \subset S_{N-1} \subsetneq S_{N}$ with $N = 2p$, its fraction of the size of the group $S_N$, its ratios to $(N-1)!$ and to $(N-2)!$, and the difference
between the numbers of parity\/-\/even and odd permutations in~$\Phi_p$. }\label{tab:phip}
\centering
\small
\renewcommand{\arraystretch}{1.25}
\begin{tabular}{crrrrc}
\hline\hline
$p$ & $|\Phi_p|$ & $|\Phi_p|/N!$ & $|\Phi_p|/(N-1)!$ & $|\Phi_p|/(N-2)!$ & $e_p - o_p$ \\[0.5pt]
\hline
$1$ & $1$ & $1/2$ & $1$ & $1$ & $+1$ \\[0.5pt]
\hline
$2$ & $3$ & $1/8$ & $1/2$ & $3/2$ & $-1$ \\
$3$ & $35$ & $\approx 1/20$ & $\approx 0.2917$ & $\approx 1.4583$ & $+1$ \\
$4$ & $1001$ & $\approx 1/40$ & $\approx 0.1986$ & $\approx 1.3903$ & $-1$ \\
$5$ & $53109$ & $\approx 1/68$ & $\approx 0.1464$ & $\approx 1.3172$ & $+1$ \\
$6$ & $4605271$ & $\approx 1/104$ & $\approx 0.1154$ & $\approx 1.2691$ & $-1$ \\
$7$ & $589809987$ & $\approx 1/147$ & $\approx 0.0947$ & $\approx 1.2313$ & $+1$ \\
$8$ & $104899483845$ & $\approx 1/199$ & $\approx 0.0802$ & $\approx 1.2033$ & $-1$ \\
$9$ & $24698838710457$ & $\approx 1/259$ & $\approx 0.0694$ & $\approx 1.1805$ & $+1$ \\
$10$ & $7444522779300351$ & $\approx 1/326$ & $\approx 0.0612$ & $\approx 1.1628$ & $-1$ \\
$11$ & $2792467448952670671$ & $\approx 1/402$ & $\approx 0.0547$ & $\approx 1.1478$ & $+1$ \\
$12$ & \seqsplit{1276369340371154144337} & $\approx 1/486$ & $\approx 0.0494$ & $\approx 1.1356$ & $-1$ \\
$13$ & \seqsplit{698008560075731437652425} & $\approx 1/577$ & $\approx 0.0450$ & $\approx 1.1250$ & $+1$ \\
$14$ & \seqsplit{450111715263775132783135875} & $\approx 1/677$ & $\approx 0.0413$ & $\approx 1.1161$ & $-1$ \\
\hline\hline
\end{tabular}
\end{table}
\end{rem}

\begin{conjecture}
As $p\to +\infty$, the ratio $|\Phi_p| \big/ (2p-2)!$ converges from above to~$1+0$.%
%%%
\footnote{\label{FootTwoSeqNumLateGrow}
Counting the number of late\/-\/growing permutations --\,of $m\in\BBN_{\geqslant 1}$ objects $\{1$,\ $\ldots$,\ $m\}$,\,-- elements of the integer sequence \seqnum{A147681} behave differently for odd indices $m=2p-1$ (as in this paper, meaning $|\Phi_p|$) and for even indices $m=2\ell$. Specifically, it is readily seen from~\cite{oeisA147681} that the ratios $|\Phi_p|/(2p-2)!$ tend from above to, hypothetically, $1+0$ \emph{much slower} than the analogous ratios of the even\/-\/indexed, $2\ell$th terms over $(2\ell-1)!$. Although the definition of late growth is uniform for any parity of the number of objects~$m$, the subsets and counts of late\/-\/growing permutations in the groups $S_{m=2p-1}$ and $S_{m=2\ell}$ form two subsequences, clearly unlike each other.}
%%%
%In relation to the number of permutations $|Phi_p|\subset S_{2p-1}$ of odd number of objects, we pick the \emph{odd}\/-\/indexed terms of the integer sequence \seqnum{A147681} (%in the {\it On-Line Encyclopedia of Integer Sequences} 
%\cite{oeisA147681}). 
%About the even\/-\/indexed terms we make an identical
  %, but separate 
%conjecture: as $p\to+\infty$, the limit 
%equals $1+0$
%, yet
   %Additionally, the even-indexed terms seem to fit well into this pattern as well. In fact, 
%we already see that the ratio of the numbers of even\/-\/indexed late growing permutations (of)
%%%%%%%%%%
%to (N-2)! starts converging earlier to 1 (attaching image of graphs) than the ratio on our paper.}
%%%
\end{conjecture}

\begin{rem}\label{obs:parity}
We note one striking structural feature of the sets~$\Phi_p$, namely
   %this data, concerning 
the distribution of even and odd permutations. % within $\Phi_p$. Namely, 
The counts of even and odd permutations in $\Phi_p$ always differ by exactly
$1$, with the dominant parity alternating with $p$: odd permutations
dominate for even $p$ and \textit{vice versa}. 
   %We record this as a conjecture for $p \geqslant 14$ onwards.
We claim this is always true (for $p \geqslant 15$ as well); a proof is contained in our subsequent text~\cite{OddEven}.
\end{rem}

For $\sigma\in\Phi_p$, we drop the forced leading $1$ and subtract $1$ from every remaining entry; the result is a late\/-\/growing permutation of \(\{1,\ldots,2p-1\}\). 

We now derive a closed\/-\/form expression for each term in the sum over
$\Phi_p$, yielding an explicit formula for $\consta(p)$.

\begin{notation}\label{def:Ek}
For an integer $k$ in the range $1 \leqslant k \leqslant N-1$ and a permutation
$\sigma \in \Phi_p$ we put, by definition,
\begin{equation}\label{eq:Ek}
  E_k \;:=\; \sum_{j=0}^{k-1}\bigl(\sigma(N-j) - 1\bigr) - (k-1)p,
\end{equation}
so that its falling factorial is
\[
  E_k^{\,\underline{p}} = \frac{E_k!}{(E_k - p)!}.
\]
\end{notation}

\begin{prop}\label{prop:closedform}
The constants $\consta(p)$ from Eq.~\eqref{eq:main} in Theorem~\ref{thm:main} satisfy the following equality:
\begin{equation}\label{eq:closedform}
  \consta(p) \cdot \prod_{k=0}^{N-1} k!
  \;=\;
  \sum_{\sigma \in \Phi_p} (-1)^\sigma \prod_{k=1}^{N-1}
  E_k^{\,\underline{p}}.
\end{equation}
\end{prop}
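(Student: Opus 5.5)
We specialise identity~\eqref{eq:reduced} of Lemma~\ref{lem:order} to the monomial weights~\eqref{EqMonomWeight}. By Lemma~\ref{lem:wronskian} the right\/-\/hand side becomes $\consta(p)\cdot\prod_{k=0}^{N-1}k!$. It therefore suffices to show that the left\/-\/hand side equals $\sum_{\sigma\in\Phi_p}(-1)^\sigma\prod_{k=1}^{N-1}E_k^{\,\underline{p}}$.

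Fix $\sigma\in S_N$ and evaluate the composition $x^{\sigma(1)-1}\partial_x^p\circ\cdots\circ\partial_x^p\, x^{\sigma(N)-1}$ from right to left by iterating~\eqref{eq:opaction}. We claim, by induction on $k$, that after absorbing the innermost $k$ weights and applying $k-1$ derivations, the result is a constant times $x^{E_k}$, with $E_k$ as in~\eqref{eq:Ek}.

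For $k=1$ we have $E_1=\sigma(N)-1$. For the step $k\to k+1$, apply $\partial_x^p$, which produces the factor $E_k^{\,\underline{p}}$ and the exponent $E_k-p$. Then multiply by $x^{\sigma(N-k)-1}$, which gives the exponent $E_k-p+\sigma(N-k)-1=E_{k+1}$. After all $N-1$ derivations and the final multiplication by $x^{\sigma(1)-1}$, the term equals
\[\Bigl(\prod_{k=1}^{N-1}E_k^{\,\underline{p}}\Bigr)\,x^{E_N-p},\qquad E_N-p=\tbinom{N}{2}-Np=0.\]
We interpret $E_k^{\,\underline{p}}$ as a polynomial in $E_k$, so the factor vanishes whenever $0\leqslant E_k<p$. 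The total degree is zero automatically, since $\sum_i(\sigma(i)-1)=N(N-1)/2=p(2p-1)$.

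Next, identify the permutations with a nonzero product. Note that $T_k=E_k-p$. If all $T_k\geqslant0$, then every factor $E_k^{\,\underline{p}}$ is a product of positive integers, and the term is nonzero.

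Conversely, let $k$ be the first index with $T_k<0$. Since $T_{k-1}\geqslant 0$ (or $k=1$), the exponent $E_k=T_{k-1}+\sigma(N-k+1)-1$ is nonnegative, while $E_k<p$. Hence $\partial_x^p$ kills the monomial $x^{E_k}$, and the whole term is zero. So no negative exponents ever actually arise, and the vanishing is exactly the event ``$T_k<0$ for some $k$''.

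The condition $\sigma(1)=1$ is in fact forced: $T_{N-1}=E_{N-1}-p=p-\sigma(1)+1$, because $E_N=p$. Since $\sigma(1)\geqslant 1$, the condition $T_{N-1}\geqslant0$ gives $\sigma(1)\leqslant p+1$ and does not by itself force $\sigma(1)=1$. So we invoke Lemma~\ref{lem:sigma1}: any surviving term with $\sigma(1)\neq1$ would carry the factor $x^{\sigma(1)-1}$ with positive degree, contradicting $E_N-p=0$.

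It is cleaner to argue directly. The final multiplication by $x^{\sigma(1)-1}$ comes after the last derivation, so the total exponent is $E_{N-1}-p+\sigma(1)-1=0$. With $E_{N-1}\geqslant p$ this forces $\sigma(1)=1$. Thus the nonzero terms are indexed precisely by $\Phi_p$ of Definition~\ref{def:phip}, and summing with signs gives~\eqref{eq:closedform}.

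The main obstacle is only bookkeeping: keeping the index shift between $E_k$ and $T_k$ straight, and checking the boundary cases $k=1$ and $k=N-1$. Everything else is a direct iteration of~\eqref{eq:opaction}.
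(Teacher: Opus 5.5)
Your proof takes the same route as the paper: specialise \eqref{eq:reduced} to the monomial weights, iterate \eqref{eq:opaction} from right to left to collect $\prod_{k=1}^{N-1}E_k^{\,\underline{p}}$, and equate the signed sum with $\consta(p)\cdot\prod_{k=0}^{N-1}k!$ via Lemma~\ref{lem:wronskian}. You are more careful than the paper about why the nonzero terms are exactly those indexed by $\Phi_p$. At the first index with $T_k<0$ you have $0\leqslant E_k<p$, so $\partial_x^p$ kills $x^{E_k}$ and no negative exponent ever actually arises.

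You do need to repair an arithmetic slip, because it produces a false intermediate claim.

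\begin{itemize}
\item By your own induction at $k=N$, the final term is $\bigl(\prod_{k}E_k^{\,\underline{p}}\bigr)\,x^{E_N}$, not $x^{E_N-p}$.
\item $E_N=\tbinom{N}{2}-(N-1)p=0$, whereas $\tbinom{N}{2}-Np=-p\neq 0$. This matches your own correct remark that the total degree is zero.
\item So ``$E_N=p$'' is wrong. In fact $E_{N-1}=p+1-\sigma(1)$, i.e.\ $T_{N-1}=1-\sigma(1)$, not $p-\sigma(1)+1$. For instance, $\sigma=(2,1,3,4)$ at $p=2$ gives $T_3=-1$.
\item Hence the condition $T_{N-1}\geqslant 0$ does force $\sigma(1)=1$ by itself. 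This contradicts your paragraph asserting that it only yields $\sigma(1)\leqslant p+1$.
\end{itemize}

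Delete that paragraph, together with its muddled appeal to Lemma~\ref{lem:sigma1}. Your ``cleaner'' direct argument is correct and is exactly this observation: $E_{N-1}-p+\sigma(1)-1=0$ together with $E_{N-1}\geqslant p$ gives $\sigma(1)=1$. With that fix the proof is complete.
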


\begin{proof}
For our chosen set of monomial weights, each surviving term in the
left-hand side of \eqref{eq:reduced} is
\begin{multline*}
  \left(1\cdot\partial_x^p\circ w_{\sigma(2)}\cdot\partial_x^p\circ\cdots
  \circ w_{\sigma(N-1)}\cdot\partial_x^p\right)(w_{\sigma(N)})\\
  =\left(1\cdot\partial_x^p\circ x^{\sigma(2)-1}\cdot\partial_x^p\circ
  \cdots\circ x^{\sigma(N-1)-1}\cdot\partial_x^p\right)(x^{\sigma(N)-1}).
\end{multline*}
The right-hand side of formula \eqref{eq:closedform} tracks the scalar
coefficient accumulated as the composition of weighted operators acts on
the monomial $x^{\sigma(N)-1}$, proceeding from right to left through the
$N-1$ applications of $\partial_x^p$. We study this right-to-left action
in more detail.

\smallskip
\noindent\textbf{Rightmost action.} The rightmost operator acts on the
weight $w_{\sigma(N)} = x^{\sigma(N)-1}$:
\[
  \partial_x^p\!\left(x^{\sigma(N)-1}\right)
  = \frac{(\sigma(N)-1)!}{(\sigma(N)-1-p)!}\, x^{\sigma(N)-1-p}
  = E_1^{\,\underline{p}}\cdot x^{\sigma(N)-1-p},
\]
where $E_1 = \sigma(N)-1$. The exponent of $x$ is now $\sigma(N)-1-p$.

\smallskip
\noindent\textbf{Second action.} Pre-multiplying by
$w_{\sigma(N-1)} = x^{\sigma(N-1)-1}$ shifts the exponent to
\[
  (\sigma(N-1)-1) + (\sigma(N)-1-p)
  = \sigma(N-1)+\sigma(N)-2-p = E_2,
\]
and a further application of $\partial_x^p$ yields the factor
\[
  \frac{E_2!}{(E_2-p)!} = E_2^{\,\underline{p}}.
\]

\smallskip
\noindent\textbf{General step.} After $k$ actions of $\partial_x^p$, the
accumulated exponent of $x$ is $E_k - p$. Pre-multiplying by
$w_{\sigma(N-k)} = x^{\sigma(N-k)-1}$ raises the exponent to $E_{k+1}$, and
the $(k+1)$th application of $\partial_x^p$ contributes the factor
$E_{k+1}^{\,\underline{p}}$. Iterating through all the $N-1$ actions, the
total scalar coefficient accumulated for the permutation
$\sigma \in \Phi_p$ is
\[
  c(\sigma,p) \;=\; \prod_{k=1}^{N-1} E_k^{\,\underline{p}}.
\]
Summing over all the contributing permutations with their signs, and
equating the result to the product of $\consta(p)$ and the
Wronskian determinant
$\Wronsk(1,x,\ldots,x^{N-1}) = \prod_{k=0}^{N-1}k!$ from
Lemma~\ref{lem:wronskian}, yields equality \eqref{eq:closedform}.
\end{proof}

\section{Two algorithms for the evaluation of
\texorpdfstring{$\consta(p)$}{const(p)}}
\label{sec:algorithms}

\subsection{The algorithm: from filtering to memoized recursion}
\label{sec:oldalg}
To directly implement our algorithm, we would start by generating the set of all $N!$ permutations of $\{1, 2, \ldots, N\}$, discard any permutation with $\sigma(1) \neq 1$, and test the remaining permutations by simulating the exponent-tracking
process from right to left: the partial sums $T_k$ of \eqref{eq:Tk} are
accumulated, and if $T_k < 0$ for some $k$, the permutation is rejected.
Only those permutations for which $T_k \geqslant 0$ for all $k$ are retained;
by Definition~\ref{def:phip}, the retained set is exactly~$\Phi_p$.

However, this naive approach can instantly be made better by using a \emph{backtracking algorithm}, which avoids generating invalid permutations in the
first place. Rather than constructing all $N!$ permutations \textit{ab
initio} and filtering \textit{post hoc}, we build permutations incrementally via backtracking, enforcing the nonnegativity condition $T_k \geqslant 0$ at each step of the construction.

Concretely, the element $\sigma(1) = 1$ is fixed from the beginning. The
remaining positions are filled from right to left: at each stage, a
candidate element $R$ is appended to the current partial permutation only
if the updated running sum, incremented by $R - (p+1)$, remains
nonnegative. If it does not, that branch of the search tree is removed
entirely, and no permutations extending that partial assignment are ever
constructed. When the remaining pool of numbers $\{2, 3, \ldots, N\}$ is
exhausted, the completed suffix is prepended with $1$ to yield a valid
permutation.

The running sum maintained by the filter is precisely $E_k - p$ of
\eqref{eq:Ek}, so the same quantity that decides whether a branch survives
also supplies the factor $E_k^{\,\underline{p}}$ contributed at that step,
and the product $c(\sigma,p)$ and the sign $(-1)^{\sigma}$ accumulate
alongside $\sigma$ itself rather than being computed once it is finished.

Storing the values (which the recursion revisits) then turns this into the \emph{`memoized' recursion} of Algorithm~\ref{alg:old} below, which does not enumerate \(\Phi_p\) at all. We revisit the same values at multiple times in the recursion tree, and keeping these values memoized saves us the time of recomputing them at the expense of memory.

Algorithm~\ref{alg:old} summarizes the old algorithm\footnote{\label{rem:code}
  %\begin{rem}\label{rem:code}
The short reference implementation of Eq.~\eqref{eq:closedform} is listed in
Appendix~\ref{app:oldcode}. Its cache key is the triple
$(\mathit{avail}, s, d)$ of Algorithm~\ref{alg:old}: the ordered list
$\mathit{avail}$ of labels not yet placed, the running exponent
$s = E - p$ of the preceding step, and the number $d$ of labels already
placed.%
  %\end{rem}
}
%%%%
in pseudo-code: the backtracking generation of $\Phi_p$ and, along each completed branch, the accumulation of the signed product of falling factorials from
Proposition~\ref{prop:closedform}.

\begin{algorithm}[htb]
\caption{The old algorithm: backtracking construction of $\Phi_p$, with the
values of the recursion stored under the key
$(\mathit{avail}, s, d)$.}
\label{alg:old}
\begin{algorithmic}[1]
\Require $p \geqslant 1$; put $N := 2p$ and $W := \prod_{k=0}^{N-1} k!$
\State $\ta \gets$ empty table
  \Comment{the store; its keys are the triples $(\mathit{avail}, s, d)$}
\Function{B}{$\mathit{avail},\ s,\ d$}
  \Comment{$\mathit{avail} = (e_1 < e_2 < \cdots < e_m)$: the labels not yet
  placed, as an ordered list; $s$: the running exponent, equal to $E - p$ of
  the preceding step; $d$: how many labels have been placed}
  \If{$(\mathit{avail}, s, d) \in \ta$}
    \State \Return $\ta[(\mathit{avail}, s, d)]$
      \Comment{this value was already produced on another branch}
  \EndIf
  \If{$d = N-1$}
    \State \Return $1$
      \Comment{all of $\{2, \ldots, N\}$ has been placed}
  \EndIf
  \State $t \gets 0$
  \For{$i = 1, \ldots, m$}
    \State $E \gets s + (e_i - 1)$
      \Comment{the exponent value $E_{d+1}$ of \eqref{eq:Ek}}
    \If{$E \geqslant p$}
      \Comment{otherwise the branch is pruned; cf.\ Definition~\ref{def:phip}}
      \State $t \gets t + (-1)^{\,m-i}\, E^{\underline{p}}\,
        \Call{B}{(e_1, \ldots, \widehat{e_i}, \ldots, e_m),\ E-p,\ d+1}$
        \Comment{$\mathit{avail}$ is sorted, so $(-1)^{m-i}
        = (-1)^{\#\{f \in \mathit{avail} \,:\, f > e_i\}}$}
    \EndIf
  \EndFor
  \State $\ta[(\mathit{avail}, s, d)] \gets t$
  \State \Return $t$
\EndFunction
\State \Return $\bigl|\Call{B}{(2, 3, \ldots, N),\ 0,\ 0}\bigr| \,/\, W$
  \Comment{$= \consta(p)$; the division is exact by
  Prop.~\ref{prop:closedform}}
\end{algorithmic}
\end{algorithm}

\begin{ex}\label{ex:peq2}
We now illustrate Proposition~\ref{prop:closedform} for $p = 2$, $N = 4$.
The contributing permutations, as identified in Example~\ref{ex:p1p2}, are
\[
  \Phi_2 = \{(1,2,3,4),\, (1,2,4,3),\, (1,3,2,4)\}.
\]
We compute the coefficient
$c(\sigma, 2) = \prod_{k=1}^{3} E_k^{\,\underline{2}}$ for each filtered
permutation $\sigma \in \Phi_2$.

\smallskip
For $\sigma = (1,2,3,4)$, the sign is $(-1)^\sigma = +1$ and
\[
  E_1 = 3,\quad E_2 = 3+2-2 = 3,\quad E_3 = 3+1-2 = 2,
\]
\[
  c((1,2,3,4),\,2) = 3^{\underline{2}}\cdot 3^{\underline{2}}
  \cdot 2^{\underline{2}} = 3!\cdot 3!\cdot 2!.
\]

For $\sigma = (1,2,4,3)$, the sign is $(-1)^\sigma = -1$ and
\[
  E_1 = 2,\quad E_2 = 2+3-2 = 3,\quad E_3 = 3+1-2 = 2,
\]
\[
  c((1,2,4,3),\,2) = 2^{\underline{2}}\cdot 3^{\underline{2}}
  \cdot 2^{\underline{2}} = 3!\cdot 2!\cdot 2!.
\]

For $\sigma = (1,3,2,4)$, the sign is $(-1)^\sigma = -1$ and
\[
  E_1 = 3,\quad E_2 = 3+1-2 = 2,\quad E_3 = 2+2-2 = 2,
\]
\[
  c((1,3,2,4),\,2) = 3^{\underline{2}}\cdot 2^{\underline{2}}
  \cdot 2^{\underline{2}} = 3!\cdot 2!\cdot 2!.
\]

\smallskip
Assembling the signed sum and dividing by the Wronskian determinant of
Lemma~\ref{lem:wronskian}, namely
$\Wronsk(1,x,x^2,x^3) = 3!\cdot 2!\cdot 1!$, we obtain
\begin{align*}
  \consta(2)
  &= \frac{\overbrace{3!\cdot 3!\cdot 2!}^{c(\sigma_1,\,2)}
  - \overbrace{3!\cdot 2!\cdot 2!}^{c(\sigma_2,\,2)}
  - \overbrace{3!\cdot 2!\cdot 2!}^{c(\sigma_3,\,2)}}
  {\underbrace{3!\cdot 2!\cdot 1!}_{\Wronsk}}\\[6pt]
  &= 3! - 2! - 2! \;=\; 6 - 2 - 2 \;=\; 2,
\end{align*}
in agreement with the value $\consta(2) = 2$ recorded in
Table~\ref{tab:intro}.
\end{ex}

\subsection{Accelerate %ing 
the algorithm: memoization over subsets and
mod-primes arithmetic}%
  %arithmetic modulo primes}
\label{sec:memo}
%%%
The plain backtracking generator of Section~\ref{sec:oldalg} produces
the set of late\/-\/growing permutations $\Phi_p$ element by element, 
yet $|\Phi_p|$ grows super-exponentially with $p$ (see Table~\ref{tab:phip}): 
already at $p = 7$ there are $589\,809\,987$
contributing permutations, each carrying its own product of $N-1$ falling
factorials. It is precisely this cost that is reduced by memoization. Because the stored values are
keyed on the sorted list $\mathit{avail}$, two branches that have placed the
same labels in different orders meet at one and the same entry, and the
recursion of Algorithm~\ref{alg:old} visits $4\,332$ distinct states at
$p = 7$ whereas the generator visited $589\,809\,987$ permutations. What
Algorithm~\ref{alg:old} does not do is make those states cheap. The rest of
this section is about the cost of a single stored state: first its address,
then its content.

The observation that drives our next acceleration is contained in formula \eqref{eq:Ek}: the
exponent value $E_k$ depends on a permutation $\sigma$ only through the \emph{sum}
$\sum_{j=0}^{k-1}(\sigma(N-j) - 1)$ of the values placed so far, and this
sum is determined by the \emph{set} of placed values, not by the order in
which they were placed. Consequently, two partial permutations that have
placed the same set of values will have identical futures: the same candidates
remain, the same exponent values $E_k$ will be produced, and the same
factors $E_k^{\,\underline{p}}$ will be accumulated. Thus, the signed sum of Proposition~\ref{prop:closedform} satisfies a
recursion over subsets, which we will show in this subsection.

The generative procedure of Section~\ref{sec:oldalg} builds permutations $\sigma$ from the right, choosing the element $\sigma(N)$, then $\sigma(N-1)$, and so on, and rejecting a branch as soon as the running
exponent index falls below $p$. We again build permutations from right to left; by Lemma~\ref{lem:sigma1}, the
value $1$ is reserved for position~$1$. At any point in that construction, the labels $\{2,\dots,N\}$ are partitioned into those already placed and those still to be placed. We write
\[
  A \;\subseteq\; \{2,3,\dots,N\}
\]
for the set of labels still to be placed. To prove that our $E_k$ depends only on the set \(A\), we first prove a lemma for the shifted labels; we subtract \(p+1\) from the remaining labels to obtain new labels symmetric around 0 (e.g.\ \(\{2,3,4\}\rightarrow\{-1,0,1\}\) for \(p=2\)).
\begin{lemma}[The shifted labels sum to zero]\label{lem:zerosum}
\[
  \sum_{j=2}^{N}\bigl(j-p-1\bigr) \;=\;\sum_{j=2}^{N}j - (N-1)(p+1)\;=\; 0 .
\]
\end{lemma}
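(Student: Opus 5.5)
The claim is a direct arithmetic identity, so I would verify it by evaluating the two elementary sums. The first equality is immediate: the sum $\sum_{j=2}^{N}(j-p-1)$ has $N-1$ terms, so splitting it gives $\sum_{j=2}^{N} j$ minus $(N-1)$ copies of $(p+1)$.

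For the second equality, I compute $\sum_{j=2}^{N} j = \tfrac{N(N+1)}{2} - 1$. With $N=2p$ this equals $p(2p+1)-1 = 2p^2+p-1$. On the other side, $(N-1)(p+1) = (2p-1)(p+1) = 2p^2+p-1$. The two expressions agree, so the difference vanishes.

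As a more conceptual cross-check, I would note that the shifted labels $j-p-1$ for $j=2,\dots,2p$ run over the integers $-(p-1),\dots,p-1$. This set is symmetric about $0$, as in the example $\{-1,0,1\}$ for $p=2$. Pairing $j$ with $2p+2-j$ then cancels terms in pairs, and the unpaired middle label $j=p+1$ is itself $0$.

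There is no real obstacle here. The only point requiring care is getting the index range right: the sum starts at $j=2$ because label $1$ is reserved for position $1$ by Lemma~\ref{lem:sigma1}, which leaves exactly $N-1$ labels.
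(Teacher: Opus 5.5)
Your proposal is correct and follows the paper's proof essentially verbatim: you evaluate $\sum_{j=2}^{N} j = p(2p+1)-1 = 2p^2+p-1$ and $(2p-1)(p+1) = 2p^2+p-1$, and observe that they agree. Your pairing $j \leftrightarrow 2p+2-j$, with the shifted labels running symmetrically over $-(p-1),\ldots,p-1$, is a valid extra cross-check that the paper only hints at through its example $\{2,3,4\}\to\{-1,0,1\}$.
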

 
\begin{proof}
$\sum_{j=2}^{N} j = \tfrac{N(N+1)}{2}-1 = p(2p+1)-1 = 2p^2+p-1$ and the remaining part of the sum, \((N-1)\) copies of \((p+1)\), also multiply to 
$(N-1)(p+1) = (2p-1)(p+1) = 2p^2+p-1$.
\end{proof}
\begin{lemma}[The state is the remaining set]\label{lem:state}
Let $A\subseteq\{2,\dots,N\}$ be the set of labels not yet placed, and let $k=(N-1)-|A|$ be
the number already placed. Then the current exponent index is
\begin{equation}\label{eq:EA}
  E(A) \;=\; p \;+\; (p+1)\,|A| \;-\; \sum_{j\in A} j ,
\end{equation}
and the depth is $k=(N-1)-|A|$. Neither depends on the \emph{order} in which the labels of
$\{2,\dots,N\}\setminus A$ were placed.
\end{lemma}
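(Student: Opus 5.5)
We start from Notation~\ref{def:Ek} and express the exponent index after $k$ labels have been placed in terms of the set of those labels. This comes straight from the definition; the complement set $A$ enters only at the end, via Lemma~\ref{lem:zerosum}.

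Let $P=\{2,\dots,N\}\setminus A$ be the set of labels already placed, so $|P|=k=(N-1)-|A|$. By \eqref{eq:Ek} and the exponent-tracking in the proof of Proposition~\ref{prop:closedform}, the next factor is $E_{k+1}^{\,\underline{p}}$. Its index is computed from the running exponent $E_k-p$ after $k$ steps, plus the newly placed label. So the quantity that Algorithm~\ref{alg:old} carries as ``$s=E-p$'' at this state is
\[
  E_k - p \;=\; \sum_{j\in P}(j-1) - kp \;=\; \sum_{j\in P}\bigl(j-p-1\bigr).
\]
For $k=0$ this is the empty sum $0$, consistent with the initial call $B\bigl((2,\dots,N),0,0\bigr)$. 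The right-hand side is visibly a sum over the set $P$. It is therefore independent of the order in which the labels of $P$ were placed, which settles the order-independence claim.

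Next, we rewrite this quantity in terms of $A$ using Lemma~\ref{lem:zerosum}:
\[
  \sum_{j\in P}(j-p-1) \;=\; -\sum_{j\in A}(j-p-1) \;=\; (p+1)|A|-\sum_{j\in A}j .
\]
Adding $p$ gives \eqref{eq:EA}, interpreted as $E(A)=s+p$, the exponent index at the state $A$. The depth formula $k=(N-1)-|A|$ is immediate because $P$ and $A$ partition $\{2,\dots,N\}$.

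The only delicate point is bookkeeping rather than mathematics. One must fix which index the symbol $E(A)$ denotes: the running value $s+p$ attached to the state, as opposed to the candidate value $E_{k+1}=s+(e_i-1)$ formed once a label $e_i$ is chosen. One must then check that this matches the conventions of \eqref{eq:Ek} at $k=0$ and at $k=N-1$. At $k=N-1$ we have $A=\emptyset$ and $E(\emptyset)=p$, which agrees with the terminal condition that the last exponent $E_{N-1}-p$ is $0$, as forced by the degree-zero Wronskian in Lemma~\ref{lem:sigma1}. I would state this convention explicitly before the computation and verify the two boundary cases against Example~\ref{ex:peq2}.
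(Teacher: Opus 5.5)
Your argument is correct and matches the paper's proof: you express $E_k$ through the set of placed labels, which gives order-independence at once, and then pass to the complement $A$ using Lemma~\ref{lem:zerosum}. Grouping the terms as $\sum_{j\in P}(j-p-1)$ before applying the zero-sum identity only rearranges the paper's substitution $\sum_{c\in C}c=S-\sum_{j\in A}j$ together with $S-(N-1)(p+1)=0$.
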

 
\begin{proof}
Let $C=\{2,\dots,N\}\setminus A$ be the placed labels, $|C|=k$. By \eqref{eq:Ek},
\[
  E_k \;=\; \sum_{c\in C}(c-1) - (k-1)p \;=\; \Bigl(\sum_{c\in C}c\Bigr) - k - (k-1)p
  \;=\; \Bigl(\sum_{c\in C}c\Bigr) - k(p+1) + p .
\]
Writing $S=\sum_{j=2}^{N}j$ and $\sum_{c\in C}c = S-\sum_{j\in A}j$, and using
$k=(N-1)-|A|$,
\[
  E_k \;=\; S - \sum_{j\in A}j - (N-1)(p+1) + |A|(p+1) + p .
\]
By Lemma~\ref{lem:zerosum}, $S-(N-1)(p+1)=0$, which gives \eqref{eq:EA}. That $k$ is
determined by $|A|$ is immediate, since one label is placed at each step.
\end{proof}

\begin{rem}
The previous version of our algorithm carried three arguments: the tuple of
remaining labels, the running exponent, and the depth. Lemma~\ref{lem:state} says that the
second and third are functions of the first. They were never distinguishing two different
situations; they were being recomputed, stored and compared for nothing. In particular two
partial constructions that have placed the same labels in different orders are the same state
and must not be stored twice. 
\end{rem}

\begin{rem}
    Setting $A=\varnothing$ in \eqref{eq:EA} gives $E(\varnothing)=p$, which is the statement
$E_{N-1}(\sigma)=p$ for every $\sigma\in\Phi_p$.
\end{rem}

We %can 
now build %define 
the new recursive algorithm.

\begin{proposition}\label{prop:recursion}
Define the function $\Psi$ on subsets $A\subseteq\{2,\dots,N\}$ by $\Psi(\varnothing)=1$ and, recursively,
\begin{equation}\label{eq:psi}
  \Psi(A) \;=\; \sum_{\substack{e\in A\\ E(A\setminus\{e\})\,\geqslant\, p}}
  (-1)^{\#\{f\in A\,:\,f>e\}}\;\; E\bigl(A\setminus\{e\}\bigr)^{\underline{p}}\;\;
  \Psi\bigl(A\setminus\{e\}\bigr),
\end{equation}
with $E(\cdot)$ as in~\eqref{eq:EA}.
By definition, put $W \mathrel{{:}{=}} \prod_{k=0}^{N-1} k!$. 
Then
\[
  \Psi\bigl(\{2,\dots,N\}\bigr) \;=\; \sum_{\sigma\in\Phi_p}(-1)^{\sigma}\prod_{k=1}^{N-1}E_k(\sigma)^{\underline p}
  \;=\; \consta(p)\cdot W .
\]
\end{proposition}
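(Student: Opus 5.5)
The plan is to prove a stronger statement for every subset $A\subseteq\{2,\dots,N\}$ by induction on $|A|$, and then specialise to $A=\{2,\dots,N\}$. Put $m=|A|$. A \emph{completion} of $A$ is a bijection $\tau$ from the positions $\{2,\dots,m+1\}$ onto $A$. Since the labels still to be placed always occupy the leftmost free slots next to the fixed entry $\sigma(1)=1$, a completion records where the remaining labels go. For such $\tau$, fill its positions from right to left, $\tau(m+1)$ first and $\tau(2)$ last, and let $A=A_m\supset A_{m-1}\supset\cdots\supset A_0=\varnothing$ be the successive sets of labels not yet placed. The claim is
\[
\Psi(A)=\sum_{\tau}\operatorname{inv}\text{-sign}(\tau)\prod_{j=0}^{m-1}E(A_j)^{\underline p},
\]
where $\operatorname{inv}\text{-sign}(\tau)=(-1)^{\operatorname{inv}(\tau(2),\dots,\tau(m+1))}$. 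The sum runs over those $\tau$ with $E(A_j)\geqslant p$ for all $j$. The case $A=\varnothing$ is the empty product and gives $\Psi(\varnothing)=1$.

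For the inductive step, split the completions of $A$ according to the label $e=\tau(m+1)$ placed first. Removing $e$ leaves a completion of $A\setminus\{e\}$ on the positions $\{2,\dots,m\}$, and this correspondence is a bijection. I will check three things.
\begin{itemize}
\item \emph{Sign.} The inversions of $\tau$ are those of the restricted word plus the pairs $(f,e)$ with $f$ to the left of $e$ and $f>e$. All of $A\setminus\{e\}$ lies to the left of $e$, so the extra factor is exactly $(-1)^{\#\{f\in A:\,f>e\}}$.
\item \emph{Factor.} The new factor is $E(A\setminus\{e\})^{\underline p}$.
\item \emph{Admissibility.} The condition on $\tau$ is $E(A\setminus\{e\})\geqslant p$ together with the conditions for the restricted completion.
\end{itemize}
Together these reproduce recursion~\eqref{eq:psi} term by term.

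To specialise, take $A=\{2,\dots,N\}$; a completion together with $\sigma(1)=1$ is exactly a $\sigma\in S_N$ with $\sigma(1)=1$. Three identifications remain, all via Lemma~\ref{lem:state}.
\begin{itemize}
\item After $k$ labels are placed, the remaining set $A_{N-1-k}$ gives $E(A_{N-1-k})=E_k(\sigma)$ as in~\eqref{eq:Ek}. So the product over $j$ is $\prod_{k=1}^{N-1}E_k^{\underline p}$, and the last factor is $E(\varnothing)=p$.
\item The label $1$ at position $1$ creates no inversions, so $\operatorname{inv}\text{-sign}(\tau)=(-1)^\sigma$.
\item From~\eqref{eq:Tk} and~\eqref{eq:Ek} one has $T_k=\sum_{j=0}^{k-1}(\sigma(N-j)-1)-kp=E_k-p$. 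Hence the pruning condition $E(A_j)\geqslant p$ for all $j$ is exactly $T_k\geqslant 0$ for all $k$, that is, $\sigma\in\Phi_p$ by Definition~\ref{def:phip}.
\end{itemize}
This proves the first equality. The second is then Proposition~\ref{prop:closedform}.

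The main obstacle is the bookkeeping in the inductive step: matching the \emph{position} of each placed label with the sign exponent $\#\{f\in A:\,f>e\}$, and with the depth index $k$ in $E_k$. The resolving observation is that the label placed at a given step has, to its left, precisely the set still unplaced at that moment. One should also stress that the pruning must be imposed explicitly rather than left to the falling factorial. A negative intermediate exponent kills the term in the operator computation, whereas $E^{\underline p}$ evaluated at a negative $E$ need not vanish. So the condition $E\geqslant p$ in~\eqref{eq:psi} is essential, not cosmetic.
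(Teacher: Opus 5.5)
Your proof is correct and follows the paper's argument: unfold recursion~\eqref{eq:psi} into one term per right-to-left placement order, then match the sign, the factor $E(A\setminus\{e\})^{\underline p}$ and the pruning with $(-1)^\sigma$, $E_k^{\underline p}$ and $T_k=E_k-p\geqslant 0$; your induction on $|A|$ over completions makes the paper's one-paragraph unfolding rigorous. One correction to your closing remark: the pruning is not essential for the value $\Psi(\{2,\dots,N\})$, because on every branch the first $E$ below $p$ lies in $\{1,\dots,p-1\}$ (since $E_1=\sigma(N)-1\geqslant 1$, and $E_{k+1}=E_k-p+\sigma(N-k)-1\geqslant 1$ whenever $E_k\geqslant p$), so the factor $E^{\underline p}=0$ already kills that branch, and the condition $E\geqslant p$ only makes the surviving terms index $\Phi_p$ exactly and saves work.
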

 
\begin{proof}
Expanding \eqref{eq:psi} recursively produces exactly one term for each way of placing the
labels $\{2,\dots,N\}$ one at a time, that is, one term for each $\sigma\in S_N$ with
$\sigma(1)=1$. The constraint $E(A\setminus\{e\})\geqslant p$ imposed at every step is the
condition $E_k(\sigma)\geqslant p$ for all $k$, which by Section~\ref{sec:phip} is exactly the
condition $\sigma\in\Phi_p$; the surviving terms are therefore indexed by $\Phi_p$. By
Lemma~\ref{lem:state} the factor contributed after the $k$-th placement is
$E_k(\sigma)^{\underline p}$, so the product of the factors along a surviving branch is
$c(\sigma,p)$. The number \(\#\{f\in A\,:\,f>e\}\) %simply 
accounts for the inversions created by inserting element \(e\) to the left of not-yet-placed values. The stated identity is then Proposition~\ref{prop:closedform}.
\end{proof}

\begin{ex}[$p=2$, reproducing Example~\ref{ex:peq2} of Section~\ref{sec:oldalg}]\label{ex:p2}
Here $N=4$ and the labels are $\{2,3,4\}$, with $E(A)=2+3|A|-\sum_{j\in A}j$ by
\eqref{eq:EA}. The values of $\Psi$ are built from the empty set upwards:
\[
  \Psi(\varnothing)=1,\qquad E(\varnothing)=2 .
\]
\underline{Singletons.} $E(\{2\})=3$, $E(\{3\})=2$, $E(\{4\})=1$. For each singleton $\{e\}$ the only
move is to place $e$, leaving $\varnothing$ with $E=2\geqslant p$, contributing
$2^{\underline 2}=2$ with sign $+$:
\[
  \Psi(\{2\})=\Psi(\{3\})=\Psi(\{4\})=2 .
\]
\underline{Pairs.}
\[
\begin{aligned}
  \Psi(\{2,3\}) &= \underbrace{-\,2^{\underline 2}\,\Psi(\{3\})}_{\text{place }2,\ E(\{3\})=2}
                 + \underbrace{+\,3^{\underline 2}\,\Psi(\{2\})}_{\text{place }3,\ E(\{2\})=3}
                 = -2\cdot 2 + 6\cdot 2 = 8,\\
  \Psi(\{2,4\}) &= \underbrace{\text{place }2:\ E(\{4\})=1<p,\ \text{rejected}}_{}
                 \;+\; 3^{\underline 2}\,\Psi(\{2\}) = 6\cdot 2 = 12,\\
  \Psi(\{3,4\}) &= \underbrace{\text{place }3:\ E(\{4\})=1<p,\ \text{rejected}}_{}
                 \;+\; 2^{\underline 2}\,\Psi(\{3\}) = 2\cdot 2 = 4 .
\end{aligned}
\]
\underline{The full set.} $E(\{3,4\})=1<p$ rejects the move that places $2$ first, and
\begin{equation}\label{EqFull234}
  \Psi(\{2,3,4\}) \;=\; -\,2^{\underline 2}\,\Psi(\{2,4\}) \;+\; 3^{\underline 2}\,\Psi(\{2,3\})
  \;=\; -2\cdot 12 + 6\cdot 8 \;=\; 24 .
\end{equation}
The three surviving branches
are the three permutations $(1,2,3,4)$, $(1,2,4,3)$, $(1,3,2,4)$ of Example~\ref{ex:peq2}, and their
contributions $72$, $-24$, $-24$ appear in~\eqref{EqFull234} %here 
in the factored form:
\[
  \underbrace{6\cdot(6\cdot 2)}_{(1,2,3,4)}
  \;+\;\underbrace{6\cdot(-2\cdot 2)}_{(1,3,2,4)}
  \;+\;\underbrace{(-2)\cdot(6\cdot 2)}_{(1,2,4,3)}
  \;=\;6\cdot(12-4)\;-\;2\cdot 12\;=\;24 ,
\]
whence two permutations give us $6\cdot(12-4) =6\cdot 8$, and 
the third permutation produces $-2\cdot 12$ as stated.
Since $W=\prod_{k=0}^{3}k!=12$, this gives $\consta(2)=24/12=2$. 

\smallskip
Algorithmically, we start from
$A = \{2,3,4\}$ (nothing placed yet) and we see from Table~\ref{TabExp2A234} 
 %on p.~\pageref{TabExp2A234} 
that only six states are ever visited.\footnote{\label{FootP2yetSmall}
We admit that at this differential order ($p=2$), the table (six states)
is of course not yet smaller than $\Phi_2$ (three permutations); 
the gain appears as $p$ grows.}
%%%
\begin{table}[htb]
\caption{From the whole set $A=\{2,3,4\}$ not yet placed --- to all of the set $A$ gradually placed (right-to-left).}\label{TabExp2A234}
\[
\renewcommand{\arraystretch}{1.2}
\begin{array}{lll}
\hline\hline
A \text{ (not yet placed)} & \text{reached after placing} & \Psi(A) \\
\hline
\{2,3,4\} & \text{---} & 24 \\
\{2,4\} & 3 & 12 \\
\{2,3\} & 4 & 8 \\
\{2\} & 3, 4 \ \text{(in either order)} & 2 \\
\{3\} & 4, \text{ then } 2 & 2 \\
\varnothing & \text{everything} & 1 \\
\hline\hline
\end{array}
\]
\end{table}
%%%
The sharing is visible at the state $A = \{2\}$: it is reached both after
placing $3$ then $4$ and after placing $4$ then $3$, that is, the
permutations $(1,2,4,3)$ and $(1,2,3,4)$ finish through one and the same
table entry, which is computed once. 
%%%
To conclude the calculation of $\consta(p=2)$, the algorithm takes $\Psi\bigl( \{2,3,4\} \bigr) = 24$ from~\eqref{EqFull234} and divides it by the Wronskian $W = \prod_{k=0}^{4-1} k! = 0! 1! 2! 3! = 12$: this yields $\consta(2) = 2$.

\smallskip

Figure~\ref{FigStatesP2} shows these six states --\,see also Table~\ref{TabStates234} on p.~\pageref{TabStates234} below\,-- which are reached by the algorithm and which contribute to the sum in~\eqref{eq:psi}, and the factor carried by each step.\footnote{A step-by-step interactive walkthrough of this recursion, for all $p\leqslant 5$, is available at \url{https://kianshah.in/projects/constp}.}
\begin{figure}[htb]
\centering
\begin{tikzpicture}[
  st/.style={draw=teal!55!black, fill=teal!8, rounded corners=3pt,
    minimum width=17mm, minimum height=9mm, align=center, font=\small},
  sh/.style={st, line width=1.1pt, fill=teal!22},
  ar/.style={-{Stealth[length=2.2mm]}, draw=black!55},
  lb/.style={font=\scriptsize, fill=white, inner sep=1pt}]
\node[st] (a234) at (0,0)       {$\{2,3,4\}$\\ $\Psi=24$};
\node[st] (a23)  at (-2.3,-1.9) {$\{2,3\}$\\ $\Psi=8$};
\node[st] (a24)  at (2.3,-1.9)  {$\{2,4\}$\\ $\Psi=12$};
\node[st] (a3)   at (-2.3,-3.8) {$\{3\}$\\ $\Psi=2$};
\node[sh] (a2)   at (2.3,-3.8)  {$\{2\}$\\ $\Psi=2$};
\node[st] (a0)   at (0,-5.7)    {$\varnothing$\\ $\Psi=1$};
\draw[ar] (a234) -- node[lb]{$e=4\colon\ {+6}$} (a23);
\draw[ar] (a234) -- node[lb]{$e=3\colon\ {-2}$} (a24);
\draw[ar] (a23)  -- node[lb]{$e=2\colon\ {-2}$} (a3);
\draw[ar] (a23)  -- node[lb, pos=0.4]{$e=3\colon\ {+6}$} (a2);
\draw[ar] (a24)  -- node[lb]{$e=4\colon\ {+6}$} (a2);
\draw[ar] (a3)   -- node[lb]{$e=3\colon\ {+2}$} (a0);
\draw[ar] (a2)   -- node[lb]{$e=2\colon\ {+2}$} (a0);
\end{tikzpicture}
\caption{The six states $A$ of recursion~\eqref{eq:psi} at $p=2$ and their values
$\Psi(A)$. The arrow $A\to A\setminus\{e\}$ places $e$ and carries the factor
$(-1)^{\#\{f\in A\,:\,f>e\}}\allowbreak\,{E(A\setminus\{e\})^{\underline{2}}}$.}
\label{FigStatesP2}
\end{figure}
\end{ex}

%\noindent
\textbf{Storing permutation subsets as bit masks.}\quad 
In Section~\ref{sec:oldalg} the advantage of memoization was touched upon, which is now directly visible (e.g., from Table~\ref{TabExp2A234}): the same subset \(A\) appears repeatedly during our computation, and recognizing it as an already evaluated expression reduces computational time at the expense of memory. The number of states visited in any memoized algorithm remains the same, thus this is not where our subset reformulation is an advancement of the algorithm. 

The advancement lies instead in what a single stored state costs. Any memoized
algorithm must record, next to each stored value, some label saying which state
that value belongs to; without the label, the value could never be found again.
In the previous algorithm (Section~\ref{sec:oldalg}), this label is the list of weights not yet
placed, together with the running exponent and the depth, an object whose size
grows with $p$: at $p = 11$ it occupies $188$ bytes against the $94$ bytes of
the value it identifies, so that more than half of the table is spent on
addressing rather than on content. Lemma~\ref{lem:state} removes this. Since the
state is nothing but a subset of $\{2, \ldots, N\}$, it is recorded by $2p-1$
bits, that is, by one machine word %integer 
for every $p \leqslant 32$, and the label
becomes both smaller than the value it labels and independent of~$p$.

\begin{implementNo}
Eeach state $A$ is written as its bit mask: the presence of a label is encoded by the bit~$1$, the absence by~$0$. 
The bits for (non)inclusion of each label $N$,\ $N-1$,\ $\dots$,\ $2$ are ordered from
left to right. 

For example, the subset \( \{2\} \subsetneq \{2,3,4\} \) is encoded by the bit mask \(001\) w.r.t.\ the label ordering $4\prec 3 \prec 2$.
 %for the set is since we include (include \(\implies 1\)) the `last' element. 
\end{implementNo}

\begin{rem}\label{rem:states}
The number of states actually stored by %the 
recursion~\eqref{eq:psi} itself is
an integer sequence: for $p = 1, \ldots, 8$ it reads
\[
  2,\quad 6,\quad 20,\quad 74,\quad 282,\quad 1100,\quad 4332,\quad 17144,
\]
which are the terms of odd indices $2p-1$ in the integer sequence \seqnum{A222855} 
(see~\cite{oeisA222855}).\footnote{\label{FootBitMaskIncline}
The definition of this integer sequence~\seqnum{A222855} refers to the bit mask representation of subsets $A\subseteq\{1$,\ $\ldots$,\ $n\}$.
Specifically, the states that are counted are exactly those whose bits, 
plotted from right to left as points in $\BBZ\times\{0,1\} \subsetneq \BBZ^2$,
  %as points from left to right, 
have the least\/-\/squares line interpolation that does not slope down:
e.g., $001$ (growth) or $010$ (horizontal line) but not~$100$ (decline).%
} %; such bit strings are what \seqnum{A222855} counts. 
%%%
\end{rem}

\begin{ex}[$p=2$]
By listing all the subsets of $\{2,3,4\}$ in Table~\ref{TabStates234}, let us compare which of them contribute to the sum $\Psi\bigl( \{2,3,4\} \bigr)$ in the recursive formula~\eqref{eq:psi} --- and which of their bit masks are counted by the integer sequence~\seqnum{A222855}.
%%%
\begin{table}[htb]
\caption{The subsets of $\{2,3,4\}$, their bit masks, and the slopes of linear interpolations (if nonnegative, then counted by the integer sequence~\seqnum{A222855}.}\label{TabStates234}
\[
\renewcommand{\arraystretch}{1.15}
\begin{array}{l|cccccccc}
\hline\hline
A & \varnothing & \{2\} & \{3\} & \{4\} & \{2,3\} & \{2,4\} & \{3,4\} & \{2,3,4\}\\[1pt]
\hline
\text{bits for } 4,3,2 & 000 & 001 & 010 & 100 & 011 & 101 & 110 & 111\\
\text{slope} & 0 & + & 0 & - & + & 0 & - & 0\\
\text{state} & \checkmark & \checkmark & \checkmark & & \checkmark & \checkmark & & \checkmark \\
\hline\hline
\end{array}
\]
\end{table}
%%%
The two lists match! This ought to be so by Remark~\ref{rem:idrev} on p.~\pageref{rem:idrev}. Indeed, it is the high\/-\/value labels (close to $N$) which are preferably taken first, hence these are likely absent from the subsets $A\subseteq \{2,\ldots,N\}$ of labels that remain available. The lower values (close to $2$) are typically picked late, hence these labels stay in the subsets $A$ for many iterations of the recursive algorithm in formula~\eqref{eq:psi}. The absent, already\/-\/taken high\/-\/value labels are encoded by bits~$0$ at the left end of the bit mask for the still\/-\/available label set~$A$; the remaining low\/-\/value labels are encoded by the bits~$1$ near the right end of the mask. Hence the nonnegative incline of the interpolating straight line.
\end{ex}

\smallskip
%\noindent
\textbf{Evaluation modulo primes.}\quad 
As for our final improvement, we note that the entries $\Psi(A)$ are large. They must be, since $\Psi(\{2,\dots,N\})
=\consta(p)\cdot W$ and the monomials' Wronskian $W$ alone has $1274$ binary digits at $p=15$. Measured at $p=12$, the
average entry already has $644$ binary digits. This, and not the number of entries, is what
exhausts the memory. The remedy is that formula~\eqref{eq:psi} is built from addition, subtraction and multiplication only.
 
\begin{define}\label{DefPsiQ}
Let $q > 2p$ be a prime, and let $\pi_q\colon \mathbb{Z}\to\mathbb{Z}/q\mathbb{Z}$ be the quotient map.
Define the $\BBZ / q\BBZ$\/-\/valued function $\Psi_q$ by %the 
recursion~\eqref{eq:psi} with every occurrence of $+$, $-$ and $\cdot$
read in $\mathbb{Z}/q\mathbb{Z}$ modulo the prime~$q$: % > 2p
postulate $\Psi_q(\varnothing)=\pi_q(1)$ and replace 
the falling factorials by their images $\pi_q\bigl(E^{\underline p}\bigr)$:
\begin{equation}\label{eq:psiq}\tag{\ref*{eq:psi}$'$}
  \Psi_q(A) \;=\; \sum_{\substack{e\in A\\ E(A\setminus\{e\})\,\geqslant\, p}}
  (-1)^{\#\{f\in A\,:\,f>e\}}\;\;
  \pi_q\bigl(E(A\setminus\{e\})^{\underline{p}}\bigr)\;\;
  \Psi_q\bigl(A\setminus\{e\}\bigr).
\end{equation}
\end{define} 
 
\begin{lemma}\label{lem:hom}
Under the above assumptions and notation, %of the above definition, 
we have that the $\BBZ$-\/valued function $\Psi$ from~\eqref{eq:psi} satisfies the equality
%Then
$\Psi_q(A)=\pi_q\bigl(\Psi(A)\bigr)$ for every subset~$A\subseteq \{2$,\ $\ldots$,\ $N\}$ yet to be places.
\end{lemma}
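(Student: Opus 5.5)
The plan is to prove $\Psi_q(A)=\pi_q\bigl(\Psi(A)\bigr)$ by strong induction on the cardinality $|A|$. The recursions \eqref{eq:psi} and \eqref{eq:psiq} have exactly the same shape: they range over the same index set, use the same pruning condition and the same signs. The quotient map $\pi_q\colon\BBZ\to\BBZ/q\BBZ$ is a ring homomorphism, so it commutes with every operation that occurs. We will not need the assumption $q>2p$ at all; it matters only later, for the Chinese Remainder reconstruction and for dividing by~$W$.

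\textbf{Base case.} For $A=\varnothing$ both functions are fixed by postulate. We have $\Psi(\varnothing)=1$ and $\Psi_q(\varnothing)=\pi_q(1)$, so the claim holds trivially.

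\textbf{Inductive step.} Let $|A|=m\geqslant1$ and assume the claim for all subsets of size $m-1$, in particular for every $A\setminus\{e\}$ with $e\in A$.

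First we check that the two sums have the same index set. The condition $E(A\setminus\{e\})\geqslant p$ is a condition on integers, given by the explicit formula \eqref{eq:EA}. It is evaluated in $\BBZ$ in both definitions and is never reduced modulo~$q$.

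Next we apply $\pi_q$ to \eqref{eq:psi}. Additivity carries $\pi_q$ inside the finite sum, and multiplicativity splits each summand into three factors:
\begin{itemize}
\item $\pi_q\bigl((-1)^{\#\{f\in A:f>e\}}\bigr)$, which is the same sign $\pm1$ read in $\BBZ/q\BBZ$;
\item $\pi_q\bigl(E(A\setminus\{e\})^{\underline p}\bigr)$, which is exactly the factor used in \eqref{eq:psiq};
\item $\pi_q\bigl(\Psi(A\setminus\{e\})\bigr)$, which equals $\Psi_q(A\setminus\{e\})$ by the induction hypothesis.
\end{itemize}
The resulting expression is the right-hand side of \eqref{eq:psiq}, so $\pi_q\bigl(\Psi(A)\bigr)=\Psi_q(A)$.

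\textbf{Only delicate point.} This is well-definedness rather than any real obstacle. Both $\Psi$ and $\Psi_q$ are defined by a recursion over the finite poset of subsets of $\{2,\dots,N\}$ under inclusion. Each value depends only on values at strictly smaller subsets, so both functions are uniquely determined and the induction on $|A|$ is legitimate.

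We will also remark that $\Psi_q$ depends only on $A$, not on the order of placements, just as $\Psi$ does by Lemma~\ref{lem:state}. Consequently the memoized modular computation returns $\pi_q\bigl(\consta(p)\,W\bigr)$ at $A=\{2,\dots,N\}$.
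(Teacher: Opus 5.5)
Your proof is correct and is the same argument as the paper's: induction on $|A|$, with the pruning condition $E(A\setminus\{e\})\geqslant p$ evaluated in $\mathbb{Z}$ and $\pi_q$ carried through the sums and products because it is a ring homomorphism. Your base case $A=\varnothing$ is the right one (the paper's proof names $A=\{2,\ldots,N\}\setminus\{e\}$ as its base case, which is a slip), and you are also right that neither primality of $q$ nor $q>2p$ is used in this lemma.
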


\begin{proof}
Induction on $|A|$. 
The base case $A=\{2,\ldots,N\}\setminus \{e\}$ is the definition. 
The condition $E(A\setminus\{e\})\geqslant p$
selecting which terms occur is a condition on the integers $E(\cdot)$, computed by
\eqref{eq:EA} in $\mathbb{Z}$, and is therefore unchanged. Each surviving term is a product of
three integers, and $\pi_q$ is a ring homomorphism, so the image of the sum of the terms is the
sum of the images.
\end{proof}

Lemma~\ref{lem:hom} gives us the same recursion over
$\mathbb{Z}/q\mathbb{Z}$ instead of $\mathbb{Z}$. Every stored entry is now an element of
$\mathbb{Z}/q\mathbb{Z}$, which for $q<2^{62}$ is one machine integer. The falling factorials
$E^{\underline p}$ are tabulated once per prime for $p\leqslant E\leqslant p(p+1)/2$, which is the full
range by Lemma~8.2 of~\cite{ShahKiselev2026}. Every time the recursion places a weight it needs the integer $E^{\underline p} = E(E-1)\cdots(E-p+1)$, i.e.\ a product of $p$ terms, and computing it %this 
from scratch each time costs $p$ multiplications. Thus, we just store all possible values of $E$, which is at most $p(p+1)/2 - p + 1$ distinct values, modulo our chosen primes. What remains is to divide by the monomials' Wronskian \(W\), which is also better done inside $\mathbb{Z}/q\mathbb{Z}$ than afterwards.

\begin{lemma}\label{lem:Winv}
Let $q>N$ be prime. Then the Wronskian determinant $W=\prod_{k=0}^{N-1}k!$ is invertible modulo $q$, and
\[
  \consta(p) \;\equiv\; \Psi_q\bigl(\{2,\dots,N\}\bigr)\cdot W^{-1} \pmod{q}.
\]
\end{lemma}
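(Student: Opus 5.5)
The plan is in two parts: first show that $W$ is a unit in $\mathbb{Z}/q\mathbb{Z}$, then transport the integer identity of Proposition~\ref{prop:recursion} to the residue ring via Lemma~\ref{lem:hom}.

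\emph{Invertibility of $W$.} By Lemma~\ref{lem:wronskian}, $W=\prod_{k=0}^{N-1}k!$. Every prime divisor of such a product is a prime divisor of some integer $m$ with $1\leqslant m\leqslant N-1$, so it is at most $N-1<q$. Since $q$ is prime, $\mathbb{Z}/q\mathbb{Z}$ is a field. No factor $m\leqslant N-1$ is divisible by $q$, so $\pi_q(m)\neq 0$ for each such $m$, and hence $\pi_q(W)\neq 0$. Therefore $\pi_q(W)$ has an inverse $W^{-1}$ in $\mathbb{Z}/q\mathbb{Z}$.

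\emph{The congruence.} Proposition~\ref{prop:recursion} gives the integer identity $\Psi(\{2,\dots,N\})=\consta(p)\cdot W$. Applying the ring homomorphism $\pi_q$ yields
\[
\pi_q\bigl(\Psi(\{2,\dots,N\})\bigr)=\pi_q(\consta(p))\,\pi_q(W).
\]
By Lemma~\ref{lem:hom}, the left-hand side equals $\Psi_q(\{2,\dots,N\})$. Multiplying both sides by $W^{-1}$ gives $\consta(p)\equiv\Psi_q(\{2,\dots,N\})\cdot W^{-1}\pmod q$, as claimed.

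\emph{The hypotheses.} Lemma~\ref{lem:hom} is stated for primes $q>2p=N$, which is exactly the hypothesis here. In any case, its proof by induction on $|A|$ only uses that $\pi_q$ is a ring homomorphism and that the pruning conditions $E(\cdot)\geqslant p$ are evaluated in $\mathbb{Z}$, so it holds for any prime $q$. The only genuine use of $q>N$ is therefore the invertibility of $W$ in the first step, and that is where the argument needs care. I do not expect a real obstacle.

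As a caveat for the algorithm, the congruence determines $\consta(p)$ only modulo $q$. Recovering the integer itself requires the Chinese Remainder Theorem over several such primes, together with a bound on $\consta(p)$; that lies outside the statement.
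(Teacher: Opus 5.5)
Your proof is correct and is essentially the paper's argument: $q\nmid W$ because every factor of $W=\prod_{k=0}^{N-1}k!$ is at most $N-1<q$, and then Proposition~\ref{prop:recursion} combined with Lemma~\ref{lem:hom} gives $\Psi_q(\{2,\dots,N\})=\pi_q\bigl(\consta(p)\cdot W\bigr)$, which you multiply by $W^{-1}$. Your side remark is also correct: Lemma~\ref{lem:hom} holds for any prime, so $q>N$ is needed only for the invertibility of $W$.
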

 
\begin{proof}
By construction, the value $W$ is the %a 
product of factorials $k!$ with $k\le N-1<q$, so no factor of $W$ is divisible by
$q$; thus $q\nmid W$ and $W$ is a unit ($\not\equiv 0$) modulo $q$. 
By Proposition~\ref{prop:recursion} and
Lemma~\ref{lem:hom}, $\Psi_q(\{2,\dots,N\})=\pi_q(\consta(p)\cdot W)$, hence %and 
multiplying by $W^{-1}$ asserts %yields %gives 
the claim.
\end{proof}

\begin{algorithm}[htb]
\caption{The deployed evaluation of $\consta(p)$: %the 
recursion~\eqref{eq:psi} swept by cardinality, modulo primes.}
\label{alg:deployed}
\begin{algorithmic}[1]
\Require $p \geqslant 1$; put $N := 2p$ and $W := \prod_{k=0}^{N-1} k!$
\Require pairwise distinct primes $q_1, \ldots, q_r$, each $q_i > N$, with
  $q_1 q_2 \cdots q_r > M_p$
\For{$i = 1, \ldots, r$ \textbf{independently, in parallel}}
  \State $q \gets q_i$
  \For{$E = p, \ldots, p(p+1)/2$}
    \State $F[E] \gets E^{\underline{p}} \bmod q$
      \Comment{tabulated once; this is the full range by
      Lemma~8.2 of \cite{ShahKiselev2026}}
  \EndFor
  \State $\Psi_q[\varnothing] \gets 1$
  \For{$m = 1, \ldots, N-1$}
    \ForAll{$A \subseteq \{2, \ldots, N\}$ with $|A| = m$}
      \State $\Psi_q[A] \;\gets\;
        \sum\limits_{\substack{e \in A \\ E(A \setminus \{e\}) \,\geqslant\, p}}
        (-1)^{\#\{f \in A \,:\, f > e\}}\;
        F\bigl[E(A \setminus \{e\})\bigr]\;
        \Psi_q\bigl[A \setminus \{e\}\bigr] \;\bmod q$
        \Comment{the recursion \eqref{eq:psi}, with $E(\cdot)$ from
        \eqref{eq:EA}}
    \EndFor
    \State discard the stored layer $m-1$
      \Comment{by \eqref{eq:psi} only sets of size $m-1$ are ever read}
  \EndFor
  \State $\rho_i \gets \Psi_q\bigl[\{2, \ldots, N\}\bigr] \cdot W^{-1}
    \bmod q$
    \Comment{the residue of $\consta(p)$, by Lemma~\ref{lem:Winv}}
\EndFor
\State \Return the unique integer $C$ with $0 \leqslant C < q_1 \cdots q_r$ and
  $C \equiv \rho_i \pmod{q_i}$ for every $i$
  \Comment{Chinese remainder theorem; $C = \consta(p)$ since
  $0 < \consta(p) \leqslant M_p < q_1 \cdots q_r$}
\end{algorithmic}
\end{algorithm}

We run Algorithm~\ref{alg:deployed} for
several primes $q_1, \ldots, q_r$. The Chinese Remainder Theorem guarantees that a nonnegative integer smaller than the product $q_1 \cdots q_r$ is
uniquely determined by its residues modulo the pairwise distinct primes
$q_i$. Thus, we are able to reconstruct the exact integer $\consta(p)$; here
$\consta(p) > 0$. The number of
primes to take is known in advance: for every $\sigma \in \Phi_p$ and
every $k$, the sum of the $k$ values placed last is at most the sum of
the $k$ largest values, which is attained by the identity permutation, so
$E_k(\sigma) \leqslant E_k(\sigma_{\mathrm{id}})$ and thus
\[
  \consta(p)
  \;\leqslant\;
  (N-1)!\;\prod_{k=1}^{N-1}
  E_k(\sigma_{\mathrm{id}})^{\,\underline{p}} \,\Big/\, W
  \;\mathrel{{=}{:}}\; M_p.
\]

At $p = 15$ the bound $M_p$ occupies $1470$ binary digits, so $r = 25$
primes just above $2^{61}$ suffice; the value $\consta(15)$ itself
turns out to occupy $960$ bits, i.e.\ $16$ such primes. The primes are
processed completely independently of one another, which is where the
parallelism lives.

Table~\ref{tab:reach} reports the resulting resource usage. With these
choices, the value $\consta(15)$, an integer of $289$ decimal
digits, takes about twelve minutes and $1.2$ GB of memory on two laptop
cores; a $128$ GB computer node reaches $p = 18$.
%%%
\begin{table}[htb]
\caption{Measured %and estimated 
resource usage of the layered,
multi\/-\/modular implementation of %the 
recursion~\eqref{eq:psi}.}
\label{tab:reach}
\centering
\renewcommand{\arraystretch}{1.25}
\begin{tabular}{crr}
\hline\hline
$p$ & peak memory & computation time \\
\hline
$14$ & $0.30$ GB & $204$ s \\
$15$ & $1.16$ GB & $704$ s \\
$16$ & $4.52$ GB & $3{,}357$ s
\\
$17$ & $17.12$ GB 
& $11{,}215$ s
\\
$18$ & $68.66$ GB  
& $45{,}759$ s 
\\
\hline\hline
\end{tabular}
\end{table}

%We have reduced the problem of finding the values of $\consta(p)$
%in \eqref{eq:main} to the evaluation of formula \eqref{eq:closedform},
%carried out by the memoized recursion of Section~\ref{sec:memo}. 
%The resulting values are summarized in Table~\ref{tab:values}; the exact
%decimal expansions and the prime factorizations are recorded in
%Appendix~\ref{app:data}. 

%%%
\begin{table}[htb]
\caption{The values of $\consta(p)$ for $p \leqslant 18$: exact values
for $p \leqslant 6$ and rounded approximations beyond, with the number of
decimal digits.
  % (predicted for $19\leqslant p \leqslant 22$ by using
  %the data from~\S\ref{sec:growth}).
   %see also Claim~\ref{ClaimAsympt} on p.~\pageref{ClaimAsympt}). 
Exact values for all $p \leqslant 18$ appear in Appendix~\ref{app:data}.}
\label{tab:values}
\centering
\renewcommand{\arraystretch}{1.25}
\begin{tabular}{ccl}
\hline\hline
$p$ & digits & $\consta(p)$ \\
\hline
$1$ & $1$ & $1$ \\
$2$ & $1$ & $2$ \\
$3$ & $2$ & $90$ \\
$4$ & $6$ & $586656$ \\
$5$ & $13$ & $1915103977500$ \\
$6$ & $22$ & \seqsplit{7886133184567796056800} \\
$7$ & $35$ & $\approx 8.587 \times 10^{34}$ \\
$8$ & $52$ & $\approx 4.594 \times 10^{51}$ \\
$9$ & $73$ & $\approx 2.059 \times 10^{72}$ \\
$10$ & $98$ & $\approx 1.236 \times 10^{97}$ \\
$11$ & $127$ & $\approx 1.512 \times 10^{126}$ \\
$12$ & $160$ & $\approx 5.498 \times 10^{159}$ \\
$13$ & $198$ & $\approx 8.396 \times 10^{197}$ \\
$14$ & $241$ & $\approx 7.398 \times 10^{240}$ \\
$15$ & $289$ & $\approx 5.046 \times 10^{288}$ \\
$16$ & $342$ & $\approx 3.503 \times 10^{341}$ \\
$17$ & $400$ & $\approx 3.198 \times 10^{399}$ \\
$18$ & $463$ & $\approx 4.881 \times 10^{462}$ \\
\hline\hline
\end{tabular}
\end{table}
%%%

\section{New values of the new sequence, and its growth rate}\label{sec:results}
%Numerical results

\noindent%
By running Algorithm~\ref{alg:deployed} described in the preceding section, we obtained the exact values of $\consta(p)$ for all $p\leqslant 18$.
Their decimal orders of magnitude are presented in Table~\ref{tab:values};
the integer numbers themselves --\,in decimal notation\,-- are found in Appendix~\ref{app:data}.
%%%(Table)-->[above Sec.]
By reading the values $\consta(p)$, we discover that they form a previously unknown sequence: it is new\,!

The positive integer sequence $\bigl\{\consta(p)\bigr\}_{p\in\BBN}$ is %new,
   %The integer sequence $\consta(p)$, $p \in \mathbb{Z}_{\geqslant 1}$, 
now recorded in the {\it On-Line Encyclopedia
of Integer Sequences} \cite{oeisA392714} as~\seqnum{A392714}.

The observed growth rate is $\consta(p)\sim p^{2 p^2}$, 
as seen from the interpolation of $\log\consta(p)$ which we discuss in~\S\ref{sec:growth} below %Claim~\ref{ClaimAsympt} on p.~\pageref{ClaimAsympt} below 
(it provides the expected number of decimal digits for the unknown values).
The super\/-\/exponential rapidity of growth explains why --\,by using the simple combinatorial Algorithm~\ref{alg:old} 
(implemented in Appendix~\ref{app:oldcode}, cf.~\cite{SciPost})\,--
it becomes so costly to accumulate the values $\consta(p)$ at $p\geqslant 10$ by actually counting \emph{every} incremental contribution from permutations $\sigma\in S_{2p}$ to these huge numbers${}\sim 10^{n\cdot 100}$.

\subsection{%\texorpdfstring{$p$}{p}-adic valuations
Prime factorisations of known $\consta(p)$}\label{sec:padic}

Let us analyse the prime decompositions %factorisations 
of positive integers $\consta(p)$. In Appendix~\ref{app:data} we factor each value $\consta(p)$ into a product of integers which are either proven prime or too big, so that primality tests only tell us that those numbers are composite (without revealing their prime factorisation).\footnote{\label{FootPrimesInsideOut}
We foresee that our computation of the integer values $\consta(p) \sim 10^{n\cdot 100}$ will be used in future to generate huge primes of the same order.
Indeed, the factorisation of $\consta(p=14)$ gave us the 206-\/decimal\/-\/digit number, by now confirmed to be a new prime (see App.~\ref{app:data}).}

It is readily seen from the data in Appendix~\ref{app:data} that the prime decomposition of each known $\consta(p)$ consists of a nucleus of small primes in high powers, times few big primes in power~$1$.
%%%
\begin{table}[htb]
\caption{The $q$-adic valuations $v_q(\consta(p))$ for primes
$q \leqslant 19$ and $2 \leqslant p \leqslant 18$. The entries $v_p(\consta(p))$
for prime $p$ are in boldface.}
\label{tab:valuations}
\centering
\normalsize%\small%\footnotesize
\setlength{\tabcolsep}{3.1pt}
\renewcommand{\arraystretch}{1.2}
\begin{tabular}{r*{17}{c}}
\hline\hline
$q \,\backslash\, p$
  & 2 & 3 & 4 & 5 & 6 & 7 & 8 & 9 & 10 & 11 & 12 & 13 & 14 & 15 & 16 & 17 & 18 \\
\hline
 2 & $\mathbf{1}$ & 1 & 5 & 2 & 5 & 4 & 17 & 11 & 16 & 7 & 18 & 13 & 17 & 11 & 49 & 33 & 32\\
 3 & $\cdot$ & $\mathbf{2}$ & 3 & 1 & 6 & 2 & 2 & 14 & 6 & 5 & 11 & 8 & 3 & 12 & 3 & 3 & 31\\
 5 & $\cdot$ & 1 & $\cdot$ & $\mathbf{4}$ & 2 & $\cdot$ & $\cdot$ & $\cdot$ & 8 & 6 & 5 & 2 & 2 & 12 & 6 & 1 & 2\\
 7 & $\cdot$ & $\cdot$ & 1 & 1 & $\cdot$ & $\mathbf{6}$ & 5 & 2 & $\cdot$ & $\cdot$ & $\cdot$ & $\cdot$ & 13 & 8 & 4 & 2 & $\cdot$\\
11 & $\cdot$ & $\cdot$ & $\cdot$ & $\cdot$ & 1 & $\cdot$ & $\cdot$ & 2 & $\cdot$ & $\mathbf{10}$ & 8 & 6 & 4 & 3 & 1 & $\cdot$ & $\cdot$\\
13 & $\cdot$ & $\cdot$ & $\cdot$ & $\cdot$ & $\cdot$ & 1 & 1 & 1 & $\cdot$ & 1 & $\cdot$ & $\mathbf{12}$ & 11 & 8 & 6 & 5 & 2\\
17 & $\cdot$ & $\cdot$ & $\cdot$ & $\cdot$ & $\cdot$ & $\cdot$ & $\cdot$ & 1 & $\cdot$ & $\cdot$ & $\cdot$ & 1 & $\cdot$ & 1 & $\cdot$ & $\mathbf{16}$ & 15\\
19 & $\cdot$ & $\cdot$ & $\cdot$ & $\cdot$ & $\cdot$ & $\cdot$ & $\cdot$ & $\cdot$ & 1 & 1 & $\cdot$ & $\cdot$ & $\cdot$ & $\cdot$ & $\cdot$ & 1 & $\cdot$\\
\hline\hline
\end{tabular}
\end{table}
%%%
For small primes $q\in\{ 2,3,5,7,11,17,19\}$ and $2 \leqslant p \leqslant 18$, we display in Table~\ref{tab:valuations} %on p.~\pageref{tab:valuations}
the powers $v_q(\consta(p))$ in which these primes~$q$ occur in the factorisation of~$\consta(p)$; a dot denotes %the
zero entry. (\textbf{NB:} Greater prime numbers, $q\geqslant 23$, do also contribute as $p$ approaches~$12$.)
   %In Table~\ref{tab:valuations} we display the $q$-adic valuations
   %$v_q(\consta(p))$ for all primes $q \leqslant 19$ and $2 \leqslant p 
   %\leqslant 14$, where a dot denotes a zero entry.
We detect that
   %One feature of Table~\ref{tab:valuations} is immediately apparent: 
the prime factors of a composite $p$ appear in $\consta(p)$ with notably high
valuations. The dominant contributions to the column of $q$-adic valuations $v_q(\consta(p))$ arise precisely when $q$ %\mid 
divides~$p$. In particular, for every prime $p\leqslant 17$ in our
range, the entry $v_p(\consta(p))$ equals exactly $p - 1$.

\begin{conjecture}\label{conj:valuation}
For every prime $p$, the $p$-adic valuation of $\consta(p)$~is 
\[
  v_p\bigl(\consta(p)\bigr) \;=\; p - 1.
\]
In~\cite{ShahKiselev2026} we establish %prove 
the lower bound $v_p(\consta(p))\geqslant p-1$, yet we already see that it actually is exact. We shall report on our progress in the subsequent paper~\cite{SciPost}.
\end{conjecture}
   %%% AI did not see any other laws or patterns.

\subsection{Growth rate}
\label{sec:growth}

By a simple count of decimal digits in the integer values $\consta(p)$ we deduce that this sequence grows faster than $p!$ or the factorial $(2p)!$, the number of permutations in the group $S_{2p}$ over which the differential operators of order~$p$ are alternated in formula~\eqref{eq:opdef}.
The growth rate of the number of digits in~$\consta(p)$ reveals %suggests 
--\,up to a multiplicative constant\,-- the growth rate of its decimal logarithm:
\[
\lg \consta(p) \sim p^2\cdot \lg p\qquad \text{as $p \gg 1 $.}
\]
Making preference of the \emph{natural} logarithms, $\log \consta(p)$ and $\log p$,
we introduce the Ansatz (currently neglecting any negative powers of~$p$ or $\log p$),
\begin{equation}\label{EqAnsatz}
\log \consta(p) \approx \alpha p^2\log p + \beta p^2 + \gamma p \log p + \delta p + \veps \log p + \mu;
\end{equation}
the parameters $\alpha,\beta,\gamma,\delta,\veps,\mu$ are determined from the (sub)sets of known values for $\log \consta(p)$ by the least squares minimisation, %Gauss
e.g., over the segments $[p_{\min}\gg 1, p_{\max} = 18]$.
%%%
The best\/-\/fit values from the Gauss method\footnote{\label{FootTooFewTooMany}
Whenever the left end of the segment $[p_{\min}, p_{\max}]$ is too close to~$1$, the input data are distorted by the contribution of sub\/-\/leading terms in the asymptotic approximations like the Stirling formula $p! \approx p^p\cdot \exp(-p)$ for~$p\gg 1$.
But whenever the left end of the segment is too close to its right end, the amount of input data becomes insufficient to fix the parameter values, whence their instability.} 
%%%
are shown in Table~\ref{TabBestFit}.
%%%
\begin{table}[htb]
\caption{The best\/-\/fit values of the parameters $\alpha,\beta,\gamma,\delta,\veps,\mu$
within the Ansatz in~\eqref{EqAnsatz}, 
over six fitting segments $[p_{\min}, p_{\max}]$.}\label{TabBestFit}
\begin{center}{\footnotesize%
\begin{tabular}{rccccccl}
\hline\hline
segment\mbox{ } & $\alpha$ & $\beta$ & $\gamma$ & $\delta$ & $\varepsilon$ & $\mu$ & max$|$resid$|$\\
\hline
$[ 5\ldots 18]$  & $1.998939$ & $-2.669619$ & $0.9239497$ & $-0.05930426$ & $3.05443\phantom{?}$ &    $2.537441$ & $3.59\cdot 10^{-6}$ \\
$[ 6\ldots 18]$ & $1.999124$ & $-2.670625$ & $0.9322903$ & $-0.07935707$ & $3.077092$ &    $2.551782$ & $1.2\phantom{?}\cdot 10^{-6}$ \\
$[ 7\ldots 18]$ & $1.999247$ & $-2.6713\phantom{??}$ & $0.9381615$ & $-0.09384501$ & $3.094169$ &   $ 2.561343$ & $4.21\cdot 10^{-7}$ \\
$[ 8\ldots 18]$ & $1.999338$ & $-2.671801$ & $0.9427179$ & $-0.1053434$ & $3.108234$ &    $2.568331$ & $1.61\cdot 10^{-7}$ \\
$[ 9\ldots 18]$ & $1.999409$ & $-2.672197$ & $0.9464585$ & $-0.1149704$ & $3.120406$ &    $2.573704$ & $6.39\cdot 10^{-8}$ \\
$[10\ldots 18]$ & $1.999466$ & $-2.672521$ & $0.9496362$ & $-0.1232932$ & $3.131249$ &    $2.577952$ & $2.26\cdot 10^{-8}$ \\
\hline\hline
\end{tabular}}%\footnotesize
\end{center}
\end{table}

Let us emphasize that by construction, these parameter values ($\alpha\approx 2$, 
$\beta\approx -2.67$, etc.) are {not} obtained from a study of the \emph{asymptotic} behaviour of $\log\consta(p)$ as~$p\to +\infty$.
All the more surprising it is that over $p\gtrsim 10$, the best\/-\/fit parameter values approach the known description of the asymptotic formula of $\log \consta(p)$ as~$p\to +\infty$.

\begin{claim}[{O.\,Zaboronski${}^{\text{\protect\ref{FootZabor}}}$}]\label{ClaimAsympt}
As $p\to +\infty$, the asymptotic expansion is
\begin{equation}\label{EqAsymptZabor}
\log\consta(p) = 2p^2\log p - b p^2 + \overline{o}(p^2),
\end{equation}
where $b\geqslant 2.6744$ (meaning $\beta \leqslant -2.6744$ in~\eqref{EqAnsatz}).
\end{claim}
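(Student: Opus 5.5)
The plan is to derive a \emph{manifestly positive} formula for $\consta(p)$ first, and only then do asymptotics. The signed sum~\eqref{eq:closedform} cannot be estimated directly: its terms are huge and cancel heavily. Following the interpretation mentioned in the Remark in the Introduction, I would identify $\consta(p)\cdot W$ with a top intersection number on the complete flag manifold $F\ell_{2p}$. Concretely, the operator $x^{\beta}\partial_x^p$ acting on monomials is a weighted shift, and the alternation over $S_{2p}$ plays the role of the antisymmetriser in the divided-difference (BGG/Demazure) description of $\int_{F\ell_N}$. So $\Psi(\{2,\ldots,N\})$ should become the integral of a product of $N-1$ degree-$p$ classes (one for each factor $E_k^{\underline p}$) against the fundamental class.

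Positivity would then come from writing each falling factorial $E^{\underline p}$ as a sum of Schubert-positive monomials and applying Monk/Pieri rules. This yields $\consta(p)=\sum_{\gamma}m(\gamma)$ with $m(\gamma)\geqslant 0$. The sum runs over a combinatorial set of chains or colourings $\gamma$ (Zaboronski's multicolouring count), whose cardinality is at most $e^{O(p\log p)}$ or $e^{O(p^{2})}$ with a controllable constant. This positive rewriting is the step I expect to be the main obstacle; it is also what would prove Claim~\ref{ClaimPositiveGrows}(i) as a by-product.

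With positivity in hand, I would use the Laplace principle $\log\consta(p)=\max_\gamma\log m(\gamma)+O(\log\#\{\gamma\})$. The leading order I would check against the crude upper bound $M_p$ from Section~\ref{sec:memo}. With $E_k(\sigma_{\mathrm{id}})=kp-k^2/2+O(k)$ and $k=tp$, $t\in(0,2)$, one gets
\[
\log\prod_{k}E_k^{\underline p}=4p^2\log p+p^2\int_0^2\log\bigl(t-\tfrac{t^2}{2}\bigr)\,dt+o(p^2).
\]
Stirling gives
\[
\log W=\sum_{k<2p}\log k!=2p^2\log(2p)-3p^2+o(p^2).
\]
Hence $\log\consta(p)\leqslant 2p^2\log p+O(p^2)$, which fixes $\alpha=2$.

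For the $p^2$ term, I would rescale a configuration $\gamma$ into a profile $\phi\colon[0,2]\to\BBR$; this profile is the continuum limit of the exponent path $E_k/p^2$, constrained as in Lemma~\ref{lem:state} by the late-growth condition. Then $\log m(\gamma)-2p^2\log p=-p^2\mathcal{F}[\phi]+o(p^2)$, where the functional $\mathcal{F}$ consists of an energy term $\int\log\phi$ and an entropy term counting the number of $\gamma$ near $\phi$. Because of that entropy term, the number of configurations cannot be discarded as $o(p^2)$: their logarithm is of order $p^2$, and this counting is exactly where the entropy enters $\mathcal{F}$. The result is \eqref{EqAsymptZabor} with $b=\inf_\phi\mathcal{F}[\phi]$.

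Finally, the numerical bound $b\geqslant 2.6744$ follows from any lower bound on the variational problem. I would obtain it by evaluating the Euler--Lagrange equation numerically, or by a dual (convexity) certificate: exhibit a test function in the Legendre dual that bounds $\mathcal{F}$ from below by $2.6744$. That value should be consistent with the fits $\beta\approx-2.6725$ in Table~\ref{TabBestFit}.
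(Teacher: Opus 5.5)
The paper gives no proof of this claim. It is quoted from Zaboronski's unpublished note, and the paper itself only supplies the fits of Table~\ref{TabBestFit}. So your plan must stand on its own, and its load-bearing step is missing: you leave the positive formula as a black box, and the mechanism you suggest cannot produce it. The factors $E_k^{\underline p}$ are numbers created by the monomial specialisation~\eqref{EqMonomWeight}. They are not classes on $F\ell_{2p}$, so there is nothing for Monk/Pieri to act on. The flag manifold enters through exponential weights instead. Put $w_j=e^{\lambda_j x}$ and $f=e^{\mu x}$ into~\eqref{eq:main}, divide by $\mu^p e^{(\lambda_1+\cdots+\lambda_{2p}+\mu)x}$, and let $\mu\to 0$; this gives
\[
  \consta(p)\prod_{1\leqslant i<j\leqslant 2p}(\lambda_j-\lambda_i)
  =\sum_{\sigma\in S_{2p}}(-1)^{\sigma}\prod_{k=2}^{2p}
  \bigl(\lambda_{\sigma(k)}+\lambda_{\sigma(k+1)}+\cdots+\lambda_{\sigma(2p)}\bigr)^{p}.
\]
This is the localisation formula for $\int_{F\ell_{2p}}$ of a product of $p$th powers of first Chern classes of tautological bundles. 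Now polarise the product, and use that the lowest-order part of $\det\bigl(e^{\mu_i\lambda_j}\bigr)$ is $W^{-1}\prod_{i<j}(\mu_j-\mu_i)(\lambda_j-\lambda_i)$. This yields the manifestly positive identity
\[
  \consta(p)\cdot W=(p!)^{2p-1}\,\bigl[t_1^{p}\cdots t_{2p-1}^{p}\bigr]
  \prod_{1\leqslant i<j\leqslant 2p}\bigl(t_i+t_{i+1}+\cdots+t_{j-1}\bigr).
\]
The coefficient counts the ways to assign each of the $\binom{2p}{2}$ intervals $\{i,\ldots,j-1\}$ to one of its points so that every point $1,\ldots,2p-1$ receives exactly $p$ intervals. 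There are $3$ such assignments for $p=2$ and $400$ for $p=3$, reproducing $\consta(2)=2$ and $\consta(3)=90$.

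Your variational step is also mis-specified. You attach the profile $\phi$ to the exponent path $E_k/p^2$ of a late-growing permutation, with energy $\int\log\phi$. But these are the configurations of the \emph{signed} sum~\eqref{eq:closedform}, so no Laplace principle applies to them. There are also at most $(2p-1)!=e^{O(p\log p)}$ of them, so they carry no entropy at order $p^2$. Run on them, your scheme returns the largest term, $\log M_p=2p^2\log p-p^2+\overline{o}(p^2)$, i.e.\ $b=1$. The data instead require cancellation by a factor $e^{-(b-1)p^2+\overline{o}(p^2)}$ relative to $M_p$, with $b\approx 2.67$.

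In the positive formula every configuration carries the same weight, and $\log\bigl[(p!)^{2p-1}/W\bigr]=(1-2\log 2)p^2+O(p\log p)$. So the \emph{whole} term $2p^2\log p$, not only the $p^2$ term, is entropy: the number of admissible assignments is $e^{2p^2\log p+O(p^2)}$, not $e^{O(p^2)}$. For the same reason, $\consta(p)\leqslant M_p$ gives only $\alpha\leqslant 2$. The reverse inequality needs this count to be large; positivity alone yields merely $\log\consta(p)\geqslant(1-2\log 2)p^2+O(p\log p)$.

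Given the formula, with $F(t)$ the product of linear forms above and $\mathbf a=(p,\ldots,p)$, the upper direction is elementary. We have $[t^{\mathbf a}]F\leqslant F(t)/t^{\mathbf a}$ for every $t>0$, so one explicit tilt profile $(t_m)$ yields a bound of the form $b\geqslant 2.6744$; this is the rigorous form of your dual certificate. The existence of $b$ in~\eqref{EqAsymptZabor} is the harder half, and it needs two things:
\begin{itemize}
\item a matching lower bound on the coefficient, for instance Gurvits' capacity bound applied to the $0/1$ matrix whose rows are the intervals and whose columns are $p$ copies of each point. Its permanent equals $\consta(p)\cdot W$, and thanks to the column repetition the bound matches the saddle-point upper bound up to a factor $e^{O(p\log p)}$;
\item a discrete-to-continuum limit of the resulting convex minimisation in $\log t$.
\end{itemize}
Neither is supplied by the proposal.
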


Both the Ansatz in Eq.~\eqref{EqAnsatz} (with $\alpha\approx 2$ and $\beta\approx -2.67$ in Table~\ref{TabBestFit}) and the asymptotic formula in Eq.~\eqref{EqAsymptZabor} (with $\alpha=2$ and $\beta\leqslant -2.6744$) yield good estimates for the number of decimal digits in the yet unknown values, e.g., $\consta(p=19\ldots 22)$, see Table~\ref{TabPredictNumDigits}. The expected order of magnitude determines the postulated number of large ($\geqslant 2p$) primes in the Chinese Remainder Theorem (which we use to speed up Algorithm~\ref{alg:deployed} by avoiding summation of huge integers${}\sim 10^{n\cdot 100}$).
%%%
\begin{table}[htb]
\caption{Extrapolated values $\log\consta(p\geqslant 19)$ from Ansatz~\eqref{EqAnsatz} over segments $[p_{\min},p_{\max}=18]$, the expected number of decimal digits in the not yet known integers~$\consta(p\geqslant 19)$, and their estimates.}\label{TabPredictNumDigits}
\begin{center}
\begin{tabular}{cccccc}
\hline\hline
$p$ & $\log\consta(p)$ & $\log\consta(p)$ & $\log\consta(p)$ & {\#}digits in $\consta(p)$ & $\consta(p)$ \\
    & $[8\ldots18]$ & $[9\ldots18]$ & $[10\ldots18]$ & (expected) & (predicted) \\
\hline
$19$ & $1223.1197$ & $1223.1198$ & $1223.1192$ & $532$  & $\approx 1.563\cdot 10^{531}$ \\
$20$ & $1393.3277$ & $1393.3278$ & $1393.3271$ & $606$  & $\approx 1.301\cdot 10^{605}$\\
$21$ & $1576.2076$ & $1576.2077$ & $1576.2070$ & $685$  & $\approx 3.45 \cdot 10^{684}$\\
$22$ & $1771.9531$ & $1771.9532$ & $1771.9524$ & $770$  & $\approx 3.54 \cdot 10^{769}$\\
\hline\hline
\end{tabular}
\end{center}
\end{table}

\section{Conclusion}\label{sec:conclusion}
\noindent%
Naturally appearing in Eq.~\eqref{eq:main} from an $N$-ary generalisation for the construction of binary Lie bracket $[{\cdot},{\cdot}]$ for vector fields $\vec{w}_j\in\mathrm{D}_{p=1}(\BBR^1)$ on the real line, the positive integer sequence $\consta(p)$ is new. This is established by comparing its values at $p\leqslant 18$ (in Table~\ref{tab:values} on p.~\pageref{tab:values} and in Appendix~\ref{app:data} below) with all other entries in the On-Line Encyclopedia of Integer Sequences (OEIS). 
The new sequence, now registered as OEIS A392\,714, belongs to the class of combinatorial models that shows the asymptotic behaviour
\[
\log\consta(p) = \alpha p^2\log p + \beta p^2 +\cdots,
\]
with the approximate values $\alpha\approx 2$ and $\beta\approx -2.67$ observed by interpolating $\log\consta(p)$ on the segments $[6,18]$,\ $\ldots$,\ $[10,18]$ using the Ansatz~\eqref{EqAnsatz}.
Using techniques and methods for large deviation models in extremal combinatorics,
Zaboronski claims the exact value $\alpha = 2$ and provides the bound $\beta\leqslant -2.6744$ for the sub\/-\/leading coefficient (see footnote~\ref{FootZabor} on p.~\pageref{FootZabor} for reference).%1.SEP.2026

The algorithm to compute $\consta(p)$ at $p\gtrsim 10$ --\,which we design in this paper\,-- is implemented in \textsf{Python} and \textsf{C${+}{+}$};
these program code files are freely available from~\cite{ShahCode2026}.
  %and from the ancillary files catalogue accompanying this manuscript on arXiv.
Running these scripts on a powerful computing machine will unravel the so far unknown values $\consta(p\geqslant 19)$; the number of decimal digits in the integers $\consta(p)$ is predicted for $19\leqslant p \leqslant 22$ in Table~\ref{TabPredictNumDigits}.

%Our sequence $\consta(p)$ is natural, and it is new. It grows fast:
%on the available data ($1 \leqslant p \leqslant 15$) we see
%\[
%  \log \consta(p) \;\sim\; \alpha\, p^2 \log p + \beta\, p^2
%  + \cdots,
%  \qquad \text{where } \alpha \approx 2,\ \beta \approx -2.86;
%\]
%this will be presented in more detail in \cite{ShahKiselev2026}.
%O.~Zaboronski studies the asymptotics as $p \to +\infty$: he claims
%$\alpha \equiv 2$, so that our match is perfect, and he gives a bound for
%$\beta$, using other techniques and methods.%\cite{Zaboronski}. 
%Our technique uses the set $\Phi_p$; there is more to know about the structure
%of this set (cf.\ the odd\slash even structure in Remark~\ref{obs:parity}).

\subsection*{Discussion}\label{sec:discussion}
The prime decomposition of known values $\consta(p)$ is very peculiar (see Table~\ref{tab:valuations} on p.~\pageref{tab:valuations} and Appendix~\ref{app:data} from p.~\pageref{app:data} onwards): every integer $\consta(p)\in\BBN$ factors into the nucleus of small primes ($< p$) in high powers and prime(s) between $p+1$ and $2p$, times very few very big primes ($\gg p$).
In our subsequent work~\cite{SciPost} we explore the $q$-adic valuations of $\consta(p)$ in more detail. In particular, we prove that for prime $p\in\BBN$, the power $p^{p-1}$ divides $\consta(p)$; our Conjecture~\ref{conj:valuation} suggests --\,on the basis of empiric evidence from $\consta(p)$ for all $p\leqslant 17$\,-- that the $p$-adic valuation of $\consta(p)$ always \emph{equals} $p-1$ for $p$ prime. The proof technique refers to the mathematics of the old Algorithm~\ref{alg:old} that computed $\consta(p)$ using the permutation group algebra.

\smallskip
The arithmetic of falling factorials in section~\ref{sec:phip}, see formula~\eqref{eq:closedform} in Proposition~\ref{prop:closedform} on p.~\pageref{prop:closedform}, dictates that only the relatively small subset $\Phi_p
\subseteq S_{2p-1} \subsetneq S_{2p}$ of late\/-\/growing permutations contributes to the value $\consta(p)$; this is valid under our initial assumption~\eqref{EqMonomWeight} which unlocks $p\geqslant 4$ after Table~\ref{tab:intro} in~\S\ref{sec:prelim}.
In the course of computing $\consta(p)$ that followed for $4\leqslant p \leqslant 14$ (see Table~\ref{tab:phip} on p.~\pageref{tab:phip}) we observed that at every~$p$, the set $\Phi_p$ consists of even and odd permutations in such a way that the difference of their numbers equals $(-1)^{p+1}$.
We claim this is true for all $p\in\BBN$: the filter (which discards the non\/-\/contributing permutations, often before fully constructing a given one) is read as a lattice walk, and two sign\/-\/reversing involutions on the orderings cancel almost everything (within $\Phi_p$) except a single permutation that alternates between even and odd parity. %Details will be provided elsewhere.
The proof is found in our next paper~\cite{OddEven}.

%\begin{rem}
%Although ``obsolete'', our method to get $\consta(p)$ is a rare
%example of effective computations in the permutation group algebra. 
%In \cite{ShahKiselev2026} we review that line and, using it, we prove that
%$p^{\,p-1}$ divides $\consta(p)$ for every prime $p$
%\cite[Theorem~8.6]{ShahKiselev2026}.
%The task is to prove the equality $v_p(\consta(p)) = p - 1$
%(Conjecture~\ref{conj:valuation}), which we already see on the data.
%\end{rem}

\subsection*{Open problems and perspectives}\label{sec:openproblems}
Finally, let us outline several open problems for future research;
we do not repeat here the questions which were posed 
in the preceding section.\footnote{\label{FootHugePrimeComp}
E.g., what are the prime factorisations of huge composite numbers ${}\sim 10^{n\cdot 100}$ appearing in the decompositions of $\consta(p)$ for $p\gtrsim 13$ (see Appendix~\ref{app:data})\,?
What is the law of prime factorisation for $\consta(p)$, seen at work in Table~\ref{tab:valuations}\,? (The answer to the second question will solve the former.)%
   %are the huge numbers${}\sim 10^{n\cdot 100}$ appearing in the factorisations of 
   %$\consta(p)$ for $p\gtrsim 10$, see Appendix~\ref{app:data}, truly prime or composite\,?
}

\begin{open}\label{OpenAsymptSeries}
The asymptotic series,
\[
\log\consta(p) \simeq 2p^2\log p -b p^2 + \gamma p\log p + \delta p + \veps\log p +\mu + \dots,
\]
likely expands w.r.t.\ \emph{negative} powers of the new variable $\log p$ as $p\to +\infty$. Can their coefficients be constrained by inequalities or algebraic equations\,?

In the limit $1/p \to +0$, does the asymptotic curve of $\consta(p)$ satisfy any differential equation --- stemming from the expansion of the Wronskian determinant 
of size $2(p+1) \times 2(p+1)$ near $\consta(p+1)$ in Eq.~\eqref{eq:main}, that expansion containing the smaller Wronskian determinant, of size $2p\times 2p$, as the upper\/-\/left minor but itself appearing near~$\consta(p)$\,?
\end{open}

Let us keep in mind, however, that for practical applications in the models of Theoretical Physics (cf.~\cite{LadaStasheff,Stasheff2019Review}) it is the \emph{small} indices $p\sim 10^{\leqslant 1}$ for which there could be any experimental detection and measurement of the intensity and contribution (if any) of the collective, $N$-ary effects (at $N=2p$) beyond the pairwise, binary interactions (at $N=2$ for $p=1$) through Lie commutators~$[{\cdot},{\cdot}]$.

%\begin{problem}
%Find many exact values of the coefficients in the asymptotic growth law:
%$\alpha = 2$ is confirmed; $\beta = \,?$ (we have a good estimate);
%$\gamma = \,?$, $\delta = \,?$, $\varepsilon = \,?$, $\mu = \,?$; the tail
%in negative powers of $p = \,?$
%\end{problem}

%\begin{problem}
%The continuous limit in $p$: an ODE upon $\consta(p)$, for
%$p \in \mathbb{Q}$, $p \in \mathbb{R}$.
%\end{problem}

%\medskip
%\noindent\textbf{Perspectives.} 
\begin{open}\label{OpnAnyN}
The arity of Wronskians in Eq.~\eqref{eq:main} is even, $N=2p$, 
yet the Wronskian determinants --\,viewed as multi\/-\/linear totally antisymmetric brackets on the algebra of functions $C^\infty(\BBR^1)$\,-- are known
to satisfy all the strongly homotopy Lie\/-\/algebra's $(\ell,m)$-\/Jacobi identities for arbitrary arities $\ell,m\geqslant 2$; a proof is found in~\cite{DzhumaWron} (see also~\cite{Kiselev2005AssociativeWronskians}). %[ForKac]
The combinatorics that works here is far more robust than the homotopy Lie theory in the associative algebra case of~\cite{HanlonWachs,DzhumaAssoc} for purely even arities~$\ell,m$.

Can the construction of alternating compositions in Eq.~\eqref{eq:opdef} and the count of Wronskians in Eq.~\eqref{eq:main} be extended to make $\consta(N/2)$ well defined for \emph{all} natural numbers $N\geqslant 2$, including odd integers that yield the differential orders $p\in\tfrac{1}{2}\BBZ$\,?\footnote{\label{FootHalfIntOrder}
To incorporate the orders $p\in\tfrac{1}{2}\BBZ$, 
one may prefer the circle $\mathbb{S}^1$ instead of the line $\BBR^1$ 
as the domain of coefficients for vector fields from $\Vect(\BBS^1)$ 
rather than $\Vect(\BBR^1)$.}

Likewise, we remember that the (in)complete generalised Wronskian determinants $W_{d\geqslant 1}^{k\geqslant 1}$ of differential orders~$k$ over $d$-dimensional base do satisfy all the strongly homotopy Lie\/-\/algebra's Jacobi identities
(\cite{Kiselev2005AssociativeWronskians,Kiselev2025Multidim}).
Can the generalised Wronskians% $W_{d\geqslant 1}^{k\geqslant 1}$
, seen as $N$-ary brackets, be realised in terms of alternated differential operators\,? %And 
If so, what will be the new combinatorial objects $\consta\bigl( (p_1,\ldots,p_d)
\bigr)$ that count the Wronskians over a $d$-\/dimensional base --- with $p_i$ denoting the respective differential order w.r.t.\ the independent variables $x_1$,\ $\ldots$,\ $x_d$\,?\\[0.5pt]
\centerline{\rule{1in}{0.7pt}}
\end{open}
   %Generalise for $p \in \mathbb{Z}$ odd; the
   %multi-dimensional generalisation over
   %$\mathbf{x} = (x^1, \ldots, x^d) \in \mathbb{R}^d$.

%\subsubsection*
\noindent\textbf{\textit{{Approbation.}}}%\label{sec:approbation}
\quad
This work has been reported by the first author at the 17th Algorithmic
Number Theory Symposium -- ANTS17 (6--10 July 2026 in Groningen, NL: poster
session), at the 36th International colloquium on group theoretic methods in Physics -- \textsc{Group36} (13--17 July 2026 in Valladolid, Spain), and at the Geometry and Quantum Theory -- GQT cluster conference (24--28 August 2026 in Malden near
Nijmegen, NL).
\quad
The second author spoke at the Prague Mathematical Physics seminar in the
Charles University (on 7 May 2026 in Prague, Czech Republic), at the
Mathematics seminar in the IH\'ES (13 May 2026 in Bures\/-\/sur\/-\/Yvette,
France), and at the Open Problems session within the 2nd ISNMP conference
on Nonlinear Mathematical Physics (28 June -- 4 July 2026 in Bad Ems,
Germany).
\quad
The authors are grateful to the organizers of all these events for
stimulating discussions and advice.

\medskip%\subsubsection*
\noindent\textbf{\textit{{Acknowledgments.}}}%\label{sec:ack}
\qquad
The first author thanks R.~Buring and G.~Petrella for discussions and
advice how to speed up the computation yet reduce the memory allocation;
suggested improvements made feasible the computation of
$\consta(p \geqslant 15)$. R.~Buring amended the code and independently
verified the values for $p \leqslant 7$.
The first author thanks the Center for Information Technology of the
University of Groningen for access to the High Performance Computing
cluster, H\'abr\'ok, and K.~C.\ Waggeveld for performing the computations
thereon. The travel of the first author was %partially 
supported by the Bernoulli Institute via projects `Algebra' and `MSc programme'.
\quad
The second author thanks M.~Kontsevich and O.~Zaboronski for discussions,
stimulating remarks, and insights. The travel of the second author was partially
supported by the Charles University, %in Prague
by the $\smash{\text{IH\'ES}}$, and by the Bernoulli Institute 
via project `Algebra'; %110135.
the stay at the ETH Zurich is supported in part by the Bernoulli Swiss scientific travel fel\-low\-ship (Switzerland).
  %Reims.

\section*{Declarations and statements}\label{sec:declarations}

\noindent%
\textbf{Authors' contribution statement.}\quad
Both authors contributed equally to this work.
   %G.Petrella: p=16,17;

\smallskip\noindent%
\textbf{Funding statement.}\quad%\subsection*{Funding}
No funding for this %the submitted 
work was obtained from any organisation.

\smallskip\noindent%
\textbf{Conflicts of interest statement.}\quad%\subsection*{Conflicts of interests}
On behalf of both authors, the corresponding author states that there is no conflict of interest.

\smallskip\noindent%
\textbf{Data availability statement.}\quad
The values of $\consta(p)$ at $1\leqslant p \leqslant 18$ 
are contained in Appendix~\ref{app:data} from p.~\pageref{app:data} onwards in this paper.
The new integer sequence $\{ \consta(p) \}_{p\in\BBN_{\geqslant 1}}$ itself is registered as %for the new integer sequence 
OEIS A392\,714 in the On-Line Encyclopedia of Integer Sequences (\cite{oeisA392714}).
The program code (spread into several files in \textsf{Python} and \textsf{C${+}{+}$}) to compute $\consta(p)$ is available from~\cite{ShahCode2026} on \textsf{github}.
\quad
The new 206-\/decimal\/-\/digit prime number from the factorisation of $\consta(p=14)$ is now recorded in the prime factor database \textsf{factordb.com}.
The certificates of primality for the 31-, 46-, 52-, 85-, and 206-\/decimal\/-\/digit factors of the integers $\consta(9)$,\ $\consta(10)$,\ $\consta(11)$,\ $\consta(12)$,\ and $\consta(14)$, respectively, are posted on the \textsf{github} page~\cite{ShahCode2026} of the first author.

\smallskip\noindent%
\textbf{Use of AI statement.}\quad%\subsection*{Use of AI}
The original, non-optimal program code was written in full by the first author in \textsf{Python} (realising Algorithm~\ref{alg:old}: see Appendix~\ref{app:oldcode}). During the
GQT cluster conference (Malden, 24--28 August 2026), where this work was presented, G.\,Petrella translated the program code into \textsf{Julia}~\cite{PetrellaJulia}, which made it roughly ten times faster. The \textsf{Python} and \textsf{Julia} programs were then given to the AI model \textsf{Claude (Anthropic)} with the request to make the computation as fast and as memory\/-\/efficient as possible. The idea, then received from \textsf{Claude}, was to use a recursive algorithm in which all arithmetic is done modulo primes, with $\consta(p)$ reconstructed by the Chinese Remainder Theorem (this is now Algorithm~\ref{alg:deployed}).\footnote{\label{FootModManyPrimes}
The collegial advice to calculate faster by doing so modulo (big enough) primes, and to reconstruct the integers $\consta(p)>0$ by using the Chinese Remainder Theorem, had been received by the authors during the 17th Algorithmic Number Theory Symposium -- ANTS17 (held on 6--10 July 2026)%, i.e.\ months before the reiteration of that same advice by \textsf{Claude}
.} 
%%%
To implement this, \textsf{Claude} split the code into two languages: 
\textsf{C${+}{+}$} solvers which evaluate the recursion modulo each prime, and a \textsf{Python} driver which chooses the primes and combines the results.\footnote{\label{FootAIonMemoryUse}
For example, at $p=15$ the economy of operative memory usage was from 60 GB down to 1.16 GB, compared with the runs of the old program code in Appendix~\ref{app:oldcode}.}
%%%
The re\/-\/calculated values $\consta(p)$ agree with those of the original
\textsf{Python} program wherever both were run ($p\leqslant 14$).

%\bibitem{Zaboronski}%{FootZabor} on p.--"--
%O. Zaboronski, Private communication, Note of 22 June 2026 and subsequent up-
%dates, 2026.

\newpage
\appendix
\section{Proof of Theorem~\ref{thm:main}}
\label{app:proof}
\noindent%
The proof relies on the following two lemmas and the corollary they imply.

\begin{lemma}[Antisymmetry]\label{lem:antisym}
The alternating composition satisfies, for $a \neq b$,
\[
  \mathcal{A}_p[\ldots, w_a, \ldots, w_b, \ldots](f)
  \;=\;
  -\,\mathcal{A}_p[\ldots, w_b, \ldots, w_a, \ldots](f).
\]
\end{lemma}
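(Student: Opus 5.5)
The plan is to reindex the sum in \eqref{eq:opdef} by composing each permutation with the transposition that swaps the two slots; the argument is a formal manipulation of finite sums. Fix positions $a\neq b$ in the argument list. Let $\tau=(a\,b)\in S_N$ be the transposition exchanging $a$ and $b$. Write $(v_1,\ldots,v_N)$ for the list obtained from $(w_1,\ldots,w_N)$ by swapping the entries in slots $a$ and $b$. Then $v_j=w_{\tau(j)}$ for every $j$, and the right-hand side of the claimed identity is
\[
  -\sum_{\sigma\in S_N}(-1)^\sigma\, v_{\sigma(1)}\partial_x^p\circ\cdots\circ v_{\sigma(N)}\partial_x^p(f)
  \;=\;-\sum_{\sigma\in S_N}(-1)^\sigma\, w_{\tau\sigma(1)}\partial_x^p\circ\cdots\circ w_{\tau\sigma(N)}\partial_x^p(f).
\]

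Next I substitute $\rho=\tau\circ\sigma$. Since left multiplication by $\tau$ is a bijection $S_N\to S_N$, as $\sigma$ runs over $S_N$ so does $\rho$. The sign is multiplicative and $(-1)^\tau=-1$, so $(-1)^\sigma=(-1)^{\tau^{-1}\rho}=-(-1)^\rho$. Substituting, the two minus signs cancel and the expression becomes
\[
  \sum_{\rho\in S_N}(-1)^\rho\, w_{\rho(1)}\partial_x^p\circ\cdots\circ w_{\rho(N)}\partial_x^p(f)=\mathcal{A}_p[w_1,\ldots,w_N](f),
\]
which is the left-hand side. This proves the lemma.

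The step that needs care is the bookkeeping of whether $\tau$ acts on the left or on the right. Swapping the arguments $w_a,w_b$ relabels the functions, i.e.\ it changes the values of $\sigma$, so it corresponds to composing with $\tau$ on the left ($\tau\circ\sigma$). Swapping positions in the composition would instead correspond to composing on the right. Either choice works, because both left and right multiplication by a fixed group element are bijections of $S_N$ and both reverse the sign. The proof uses only linearity of the finite sum and does not depend on $p$, on $N$ being even, or on the smoothness of the weights.
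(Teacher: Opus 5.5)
Your proof is correct and is essentially the paper's argument: reindex the sum over $S_N$ by composing with the transposition $\tau=(a\,b)$, which is a bijection of $S_N$ that reverses the sign. Your left composition $\tau\circ\sigma$ is exactly the relabelling of the weights that the paper describes in words (it writes $\sigma\circ\tau$). However, your closing remark that ``either choice works'' is loose: composing on the right swaps two positions in the composition, so it proves a different identity, not the stated one.
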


\begin{proof}
Compose each summation index $\sigma$ in \eqref{eq:opdef} with the
transposition $\tau = (a\ b)$. As $\sigma$ ranges over all of $S_N$, so
does $\sigma\circ\tau$, and $(-1)^{\sigma\circ\tau} = -(-1)^\sigma$.
Simultaneously, composing with $\tau$ interchanges the labels $a$ and $b$
throughout, which is equivalent to swapping $w_a$ and $w_b$ in the weight
list. Relabeling $\sigma \mapsto \sigma\circ\tau$ in the sum thus changes
its sign, proving the claim.
\end{proof}

\begin{lemma}[Derivative count]\label{lem:count}
In every monomial produced by the full Leibniz expansion of any summand of
$\mathcal{A}_p[w_1,\ldots,w_N](f)$, the total number of $x$-derivatives is
exactly
\[
  N \cdot p \;=\; 2p^2.
\]
\end{lemma}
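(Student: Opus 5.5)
We prove Lemma~\ref{lem:count} by counting operator symbols $\partial_x$ before and after the Leibniz expansion. Fix a permutation $\sigma\in S_N$ and consider the summand
\[
  w_{\sigma(1)}\partial_x^p\circ w_{\sigma(2)}\partial_x^p\circ\cdots\circ w_{\sigma(N)}\partial_x^p\,(f).
\]
Before any expansion, this expression contains exactly $N$ occurrences of the operator $\partial_x^p$, that is, $N\cdot p$ elementary derivations $\partial_x$ in total. The plan is to show that the Leibniz expansion merely \emph{distributes} these $Np$ single derivations among the factors $w_{\sigma(2)},\ldots,w_{\sigma(N)}$ and $f$, and never creates or destroys any.

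We argue by induction on the number of composed operators, working from right to left. Let $G_m$ denote the result of applying the rightmost $m$ weighted operators to $f$. The inductive claim is that $G_m$ is a sum of monomials of the form $\prod_i w^{(a_i)}\cdot f^{(b)}$ whose total derivative order $\sum_i a_i+b$ equals $mp$. For the base case $m=1$, the expression is $w_{\sigma(N)}\,\partial_x^p f$, which has order $p$.

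For the inductive step, we apply $w\,\partial_x^p$ to such a monomial. The operator $\partial_x^p$ acts on the product by the generalised Leibniz rule
\[
  \partial_x^p\Bigl(\prod_j g_j\Bigr)=\sum_{k_1+\cdots=p}\binom{p}{k_1,\ldots}\prod_j g_j^{(k_j)}.
\]
Each resulting monomial has total order exactly $p$ larger than before. Multiplication by the undifferentiated weight $w$ adds the factor $w^{(0)}$ and leaves the order unchanged. Hence $G_{m+1}$ has order $(m+1)p$, and at $m=N$ we obtain $Np=2p^2$.

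The only point needing care is what ``monomial'' means. We take it to be a formal product of derivatives of the symbols $w_j$ and $f$, considered before any cancellation or collecting of like terms. Multinomial coefficients and signs $(-1)^\sigma$ do not affect the count. The statement is therefore a homogeneity property: every summand, and hence $\mathcal{A}_p$, is homogeneous of total differential degree $2p^2$ in the jet variables $w_j^{(k)}$ and $f^{(k)}$. This homogeneity is what will later be combined with Lemma~\ref{lem:order} to match the degree $\binom{N}{2}=p(2p-1)$ of the Wronskian plus $p$ derivatives on $f$. I expect no step to be a genuine obstacle; the main care is just stating this grading cleanly.
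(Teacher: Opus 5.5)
Your proof is correct and uses the same idea as the paper: each of the $N$ composed operators contributes exactly $p$ derivations, and the Leibniz rule only redistributes them among the factors without changing their total. Your right-to-left induction on the number of applied operators spells out the paper's one-line redistribution argument more formally.
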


\begin{proof}
Each summand in \eqref{eq:opdef} is the composition of $N = 2p$ operators,
each contributing exactly $p$ applications of $\partial_x$. The Leibniz
rule redistributes these derivatives among the factors in every term, but
preserves the total count. Thus every monomial in the expansion carries
exactly $Np = 2p^2$ derivatives.
\end{proof}

\begin{cor}[Order of $\mathcal{A}_p$]\label{cor:orderp}
The operator $\mathcal{A}_p[w_1,\ldots,w_N]$ has differential order
exactly $p$ in $f$. That is, every surviving term in the expansion of
\eqref{eq:opdef} carries the factor $\partial_x^p(f)$, and no term
$\partial_x^k(f)$ with $k \neq p$ appears with nonzero coefficient.
\end{cor}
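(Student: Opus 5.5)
The plan is to combine Lemma~\ref{lem:antisym} with Lemma~\ref{lem:count}: the first gives a lower bound on how many derivatives land on the weights, and the second turns that into an upper bound on how many can land on $f$. A separate observation gives the matching lower bound for $f$.

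First, expand every summand of \eqref{eq:opdef} fully by the Leibniz rule. This writes $\mathcal{A}_p[w_1,\ldots,w_N](f)$ as a finite sum
\[
  \sum_{k\geqslant 0}\ \sum_{a=(a_1,\ldots,a_N)} c_{a,k}\,
  w_1^{(a_1)}\cdots w_N^{(a_N)}\,\partial_x^k(f),
\]
with integer coefficients $c_{a,k}$ that do not depend on the $w_j$ or on $f$. The expansion is multilinear: each $w_j$ occurs exactly once in every term. We regard the $w_j$ and $f$ as free jet variables, so the monomials $w_1^{(a_1)}\cdots w_N^{(a_N)}\partial_x^k f$ are linearly independent. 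The coefficients $c_{a,k}$ are therefore uniquely defined, and the claim amounts to $c_{a,k}=0$ whenever $k\neq p$.

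\emph{Lower bound $k\geqslant p$.} In every composition $w_{\sigma(1)}\partial_x^p\circ\cdots\circ w_{\sigma(N)}\partial_x^p(f)$, the innermost step already applies $\partial_x^p$ to $f$. The outer operators only multiply by weights and differentiate further. Hence every Leibniz monomial carries $\partial_x^k f$ with $k\geqslant p$.

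\emph{Upper bound $k\leqslant p$.} Swapping $w_i$ and $w_j$ and then relabelling variables shows that Lemma~\ref{lem:antisym} is equivalent to $c_{a\circ\tau,k}=-c_{a,k}$ for the transposition $\tau=(i\,j)$. This step uses the uniqueness of the coefficients established above. Now suppose $a_i=a_j$ for some $i\neq j$. Then $a\circ\tau=a$, so $c_{a,k}=-c_{a,k}$ and hence $c_{a,k}=0$. Thus every surviving monomial has pairwise distinct orders $a_1,\ldots,a_N$, and therefore
\[
  \sum_i a_i\geqslant 0+1+\cdots+(N-1)=p(2p-1).
\]
By Lemma~\ref{lem:count}, $k+\sum_i a_i=2p^2$. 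It follows that $k\leqslant 2p^2-p(2p-1)=p$.

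Combining the two bounds gives $k=p$ in every surviving term, so $\partial_x^p(f)$ factors out, as claimed. As a by-product, the equality forces $\{a_i\}=\{0,\ldots,N-1\}$ with antisymmetric coefficients. The remaining expression is therefore a constant multiple of $\det\bigl(w_j^{(i-1)}\bigr)=\Wronsk(w_1,\ldots,w_N)$, which recovers Theorem~\ref{thm:main}.

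The step I expect to need the most care is the passage from the operator identity of Lemma~\ref{lem:antisym} to the coefficientwise antisymmetry $c_{a\circ\tau,k}=-c_{a,k}$. This requires treating the $w_j$ as formally independent functions, for instance jets of generic smooth or polynomial functions, so that the normal form of the expansion is unique. I would justify it by noting that the monomials in distinct derivative orders of independent functions are algebraically independent in the jet space.
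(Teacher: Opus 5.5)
Your proposal is correct and follows the paper's proof essentially step for step. The lower bound $k\geqslant p$ comes from the innermost $\partial_x^p$ acting on $f$. The upper bound comes from antisymmetry forcing pairwise distinct weight orders, whose minimal total $p(2p-1)$, set against the budget $2p^2$ of Lemma~\ref{lem:count}, gives $k\leqslant p$. Your explicit argument that the coefficients $c_{a,k}$ are well defined (treating the $w_j$ and $f$ as free jet variables) makes rigorous a point the paper uses only implicitly when it says the swap ``maps the monomial to itself while negating its coefficient''.
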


\begin{proof}
Let $k$ denote the differential order of $f$ in a given term of the full
Leibniz expansion, so that the term contains the factor
$\partial_x^k(f)$.

\smallskip
\noindent\emph{Lower bound $k \geqslant p$.} In each summand of
\eqref{eq:opdef}, the innermost operation is
$w_{\sigma(N)}\partial_x^p(f)$, which already contains $\partial_x^p(f)$.
Every subsequent application of $\partial_x^p$ from the left acts on an
expression involving $\partial_x^p(f)$ via the Leibniz rule: for smooth
$g$ and $m \geqslant 0$,
\[
  \partial_x^m\!\left(g\cdot\partial_x^p(f)\right)
  = \sum_{l=0}^{m}\binom{m}{l}\partial_x^l(g)\cdot\partial_x^{p+m-l}(f),
\]
so the order of $f$ in each resulting term is $p + m - l \geqslant p$.
Proceeding inductively through all $N$ composition steps, every monomial
satisfies $k \geqslant p$.

\smallskip
\noindent\emph{Upper bound $k \leqslant p$.} We can write the $2p^2$
derivatives of Lemma~\ref{lem:count} as $k$ acting on $f$ and $2p^2 - k$
acting on the weight functions, with nonnegative orders
$k_1, \ldots, k_N$ (that is, $k_i$ derivatives on $w_i$) satisfying
$\sum_{i=1}^N k_i = 2p^2 - k$.

By Lemma~\ref{lem:antisym}, if $k_a = k_b$ for some $a \neq b$, then
swapping $w_a$ and $w_b$ maps the monomial to itself while negating its
coefficient, forcing that coefficient to zero. Thus every term with
nonzero coefficient has \emph{pairwise distinct} values
$k_1, \ldots, k_N$.

Since $k_1, \ldots, k_N$ are $N = 2p$ pairwise distinct nonnegative
integers,
\begin{equation}\label{eq:lb}
  \sum_{i=1}^{N} k_i
  \;\geqslant\;
  0 + 1 + 2 + \cdots + (N-1)
  \;=\;
  \frac{N(N-1)}{2}
  \;=\;
  p(2p-1).
\end{equation}
Thus
\[
  2p^2 - k \;=\; \sum_{i=1}^{N}k_i \;\geqslant\; p(2p-1) = 2p^2 - p,
\]
which gives $k \leqslant p$. Combining the two bounds yields $k = p$.
\end{proof}

\begin{rem}\label{rem:budget}
The lower bound in \eqref{eq:lb} is tight: equality holds if and only if
$\{k_1, \ldots, k_N\} = \{0, 1, \ldots, N-1\}$. The budget for weight
derivatives is exactly $2p^2 - p = p(2p-1)$, matching this minimum
precisely. Every other profile of pairwise distinct nonnegative integers
--- for instance $\{0,1,2,3,4,6\}$ when $N = 6$, $p = 3$, requiring
$0+1+2+3+4+6 = 16 > 3 \cdot 5 = 15$ derivatives --- exceeds the available
budget and cannot contribute.
\end{rem}

\begin{proof}[Proof of Theorem~\ref{thm:main}]
By Corollary~\ref{cor:orderp}, every surviving term in the expansion of
the operator $\mathcal{A}_p[w_1,\ldots,w_N](f)$ has the form
\[
  C_{\mathbf{k}}\cdot\prod_{i=1}^{N}\partial_x^{k_i}(w_i)\cdot
  \partial_x^p(f),
\]
where $C_{\mathbf{k}}$ is a numerical coefficient and
$\mathbf{k} = (k_1, \ldots, k_N)$ is the order profile on the weights.

As shown in the proof of Corollary~\ref{cor:orderp}, antisymmetry forces
each surviving profile to have pairwise distinct $k_i$. With $k = p$
fixed, the derivative budget on the weights is $2p^2 - p = p(2p-1)$, which
by Remark~\ref{rem:budget} is exactly the minimum achievable by $N$
pairwise distinct nonnegative integers. Thus equality must hold in
\eqref{eq:lb}, and the \emph{unique} surviving profile is
\[
  \{k_1, \ldots, k_N\} \;=\; \{0, 1, \ldots, N-1\}.
\]
The alternating sum over all assignments of this profile to the weight
functions is, by definition, the Wronskian:
\[
  \Wronsk(w_1,\ldots,w_N)
  \;:=\;
  \det\!\bigl[\partial_x^{j-1}(w_i)\bigr]_{1\leq i,j\leq N}
  \;=\;
  \sum_{\sigma\in S_N}(-1)^\sigma\prod_{i=1}^{N}
  \partial_x^{\sigma(i)-1}(w_i).
\]
The coefficient $C_{\mathbf{k}}$ arises purely from the combinatorics of
the iterated Leibniz expansion and depends only on $p$, not on $f$ or on
the weight functions. Denoting this factor by $\consta(p)$, we
conclude
\[
  \mathcal{A}_p[w_1,\ldots,w_N](f)
  \;=\;
  \consta(p)\cdot\Wronsk(w_1,\ldots,w_N)\cdot\partial_x^p(f).
  \qedhere
\]
\end{proof}

\section{The old, non-optimal code}
\label{app:oldcode}
\subsection{Maple}\label{app:maplecode}

{\footnotesize
\begin{verbatim}
restart;
with(combinat): with(GroupTheory): with(LinearAlgebra):
constp := proc(p)
    local N, ComposeOp, expression, sigma, M, Wr, RHS;
    N := 2*p;
    ComposeOp := proc(sigma)
        local expr, j;
        expr := f(x);
        for j from N by -1 to 1 do
            expr := cat(w, sigma[j])(x)*diff(expr, [x $ p]);
        end do;
        expand(expr);
    end proc;
    expression := 0;
    for sigma in permute(N) do
        expression := expression
            + PermParity(Perm(sigma))*ComposeOp(sigma);
    end do;
    expression := expand(expression);
    M := Matrix(N, N, (i, j) -> diff(cat(w, j)(x), [x $ (i - 1)]));
    Wr := Determinant(M);
    RHS := expand(Wr*diff(f(x), [x $ p]));
    simplify(expression/RHS);
end proc:
for p from 1 to 3 do print(p, constp(p)) end do:
\end{verbatim}
}

\subsection{Python}
\noindent%
For convenience, we keep here the old, short code, which refers to the
initial memoize backtracking approach. Run as printed, it reproduces
$\consta(p)$ for all $p \leqslant 7$ within seconds. The complete package, including the layered multi-modular implementation of
Algorithm~\ref{alg:deployed}, is available at \cite{ShahCode2026}.

{\footnotesize%
\begin{verbatim}
from math import prod, factorial
from functools import lru_cache

def a(n):
    N = 2*n
    @lru_cache(maxsize=None)
    def b(avail, s, d):
        if d == N-1: return 1
        t = 0
        for i, e in enumerate(avail):
            ns = s + e - n - 1
            if ns < 0: continue
            sgn = -1 if (len(avail)-1-i) & 1 else 1
            t += (sgn * prod(range(ns+1, ns+n+1))
                  * b(avail[:i]+avail[i+1:], ns, d+1))
        return t
    return (abs(b(tuple(range(2, N+1)), 0, 0))
            // prod(factorial(k) for k in range(N)))

print([a(n) for n in range(1, 8)])
\end{verbatim}

}%\footnotesize

\bigskip
\noindent$\boldsymbol{\copyright}$\qquad%\subsubsection*{Copyright notice}
The copyright for all newly designed program files %software modules 
which are %have been 
referred to in %presented in 
this paper is retained by K.\,C.\,Shah;
provisions of the~MIT free software license apply.
   %the update which allowed computing c(p=15) was suggested by dr. G.Petrella;

\section{Values of \texorpdfstring{$\consta(p)$}{const(p)} up to
\texorpdfstring{$p = 18$}{p = 18}, and a new 206-decimal-digit prime}
\label{app:data}
\noindent%
Here we present the known values of $\consta(p)$ up to $p = 18$,
together with their prime factorizations.

%The order of battle.
The values $\consta(p=3)=90$ (see Table~\ref{tab:intro}), $\consta(p=4)$,\ $\ldots$,\ $\consta(p=15)$, and $\consta(p=18)$ were first obtained by the first author: on a laptop for $p\leqslant 15$ and on H\'abr\'ok high performance computing cluster (CIT, University of Groningen) for $p=18$; here, the first author enjoyed advice and help from K.\,C.\,Waggeveld.
The values $\consta(p=16)$ and $\consta(p=17)$ were first obtained and posted on the \textsf{github} page by dr.\ G.\,Petrella (between 25 August and 1 September 2026); we re\/-\/computed (on H\'abr\'ok) --\,and we now confirm\,-- these two values.\footnote{\label{FootRedo}
Quite naturally, every modification or acceleration of the algorithm, which we describe in section~\ref{sec:algorithms} and in~\cite{SciPost}, was immediately tested for reproduction of all the previously known values $\consta(p)$ at smaller indices~$p\geqslant 1$. In May 2026, then\/-\/known values $\consta(p\leqslant 7)$ were verified independently by R.\,Buring at Ecole Polytechnique (France) by using program code in C/FLINT (\textbf{F}ast \textbf{Li}brary for %\textbf{I}nteger 
\textbf{N}umber \textbf{T}heory, code: 3.5.0 release).}%RB:laptop;

The prime factorizations below were obtained with Alpern's integer
factorization calculator \cite{alpertron}, which in our usage reliably
extracts prime factors up to about $30$ decimal digits. For each
$\consta(p)$, once no further factor below this threshold could be
found, the remaining cofactor was not factored further; instead we
subjected it to a primality test in PARI/GP \cite{PARI}. A cofactor that passes
this test is reported below as a single unfactored integer, and is proven
prime; for each of the nine prime factors with at least $20$ digits, an ECPP
primality certificate \cite{AtkinMorain1993} is available in the folder
\texttt{certificates} of~\cite{ShahCode2026}. 
%%%
The exceptions --\,which failed the probabilistic PARI/GP test\,-- 
are the large cofactors appearing in the
factorizations of $\consta(13)$, $\consta(15)$, $\consta(16)$, $\consta(17)$ and $\consta(18)$; %, which fail the test and 
these exceptions are %accordingly 
labelled ``(Composite)'' without being split further; 
on each of these five cofactors, the elliptic curve method \cite{Lenstra1987}, run with GMP-ECM \cite{GMPECM} to the $35$-digit level, found no factor (so, those must be very big).

\medskip

\constentry{1}{2}{1}{1}

\constentry{2}{4}{2}{2}

\constentry{3}{6}{90}{$2$ $\cdot$ $3^2$ $\cdot$ $5$}

\constentry{4}{8}{586656}{$2^5$ $\cdot$ $3^3$ $\cdot$ $7$ $\cdot$ $97$}

\constentry{5}{10}{1915103977500}{$2^2$ $\cdot$ $3$ $\cdot$ $5^4$ $\cdot$ $7$ $\cdot$ $79$ $\cdot$ $103$ $\cdot$ $4483$}

\constentry{6}{12}{7886133184567796056800}{$2^5$ $\cdot$ $3^6$ $\cdot$ $5^2$ $\cdot$ $11$ $\cdot$ $23$ $\cdot$ $223$ $\cdot$ $239$ $\cdot$ $1002820739$}

\constentry{7}{14}{85873408332103907284746052081828368}{$2^4$ $\cdot$ $3^2$ $\cdot$ $7^6$ $\cdot$ $13$ $\cdot$ $29$ $\cdot$ $1361$ $\cdot$ $2928279181$ $\cdot$ $3373618837829$}

\constentry{8}{16}{4594491123326092088701002220876785865521537220214784}{$2^{17}$ $\cdot$ $3^2$ $\cdot$ $7^5$ $\cdot$ $13$ $\cdot$ $269$ $\cdot$ $9661$ $\cdot$ $4772558055911$ $\cdot$ \seqsplit{1437228585673967071537}}

\constentry{9}{18}{2059560342372549855311520646192231857828658156995213197895311773186910208}{$2^{11}$ $\cdot$ $3^{14}$ $\cdot$ $7^2$ $\cdot$ $11^2$ $\cdot$ $13$ $\cdot$ $17$ $\cdot$ $43$ $\cdot$ $151$ $\cdot$ $2536393$ $\cdot$ $1193919540273283$ $\cdot$ \seqsplit{8160866422780307366231285008303} (31 digits, prime)}

\constentry{10}{20}{12366585616687922175229690518161006581890827598464384763435846689194555321926721635186969600000000}{$2^{16}$ $\cdot$ $3^6$ $\cdot$ $5^8$ $\cdot$ $19$ $\cdot$ $2318471913717203$ $\cdot$ \seqsplit{3489987569680376097511} $\cdot$ \seqsplit{4310255301386067349690402472139980447858870477} (46 digits, prime)}

\constentry{11}{22}{1512170566890529388470809077236613950878090470607103334738020704327297913201742894343506377843406342016226367608547680866000000}{$2^7$ $\cdot$ $3^5$ $\cdot$ $5^6$ $\cdot$ $11^{10}$ $\cdot$ $13$ $\cdot$ $19$ $\cdot$ $25457$ $\cdot$ $1304111$ $\cdot$ $19027007$ $\cdot$ $277381877879789$ $\cdot$ \seqsplit{2724104292276785198723} $\cdot$ \seqsplit{1017528619742366754559096610489525888417939019147331} (52 digits, prime)}

\constentry{12}{24}{5498204731735819824932764431577828853091535075963690476426743279778624624056195101600179165694626009665228817515586720358929229075113647839992833114429030400000}{$2^{18}$ $\cdot$ $3^{11}$ $\cdot$ $5^5$ $\cdot$ $11^8$ $\cdot$ $23$ $\cdot$ $6240374221$ $\cdot$ $4156745629145249267$ $\cdot$ \seqsplit{150087192783934271953579} $\cdot$ \seqsplit{1973877182896509567154686476569916604851423966617859790944299941672435839359560447739} (85 digits, prime)}

\constentry{13}{26}{839630392364690485033794364133039282555694250991715962816289209038314705544359344129440199511545622749161083016799037230605187931570735637773841615964174834585398693013887219584559285780585126502400}{$2^{13}$ $\cdot$ $3^8$ $\cdot$ $5^2$ $\cdot$ $11^6$ $\cdot$ $13^{12}$ $\cdot$ $17$ $\cdot$ $97$ $\cdot$ $337$ $\cdot$ \seqsplit{27243397385914871724447417691488851104574713227852664272084500718530055050039017404332321041867964292700649936028447331458925288320887246456354056141933129121535401} (164 digits) (Composite)}

\constentry{14}{28}{7398040169232585459350131400876616433108456563542326671820170467323437649015925253611993741824016395372908844522732645092417947941972247942351664804568139456872774721959807686464898490851546742468976729501727604943676101572611708283345305600}{$2^{17}$ $\cdot$ $3^3$ $\cdot$ $5^2$ $\cdot$ $7^{13}$ $\cdot$ $11^4$ $\cdot$ $13^{11}$ $\cdot$ \seqsplit{32891285673563672246003165608072510232318699774051357913081882353000559529851490628731802683023201810328925871944565109492955446073147116264586137549690490817951980312345150293192647954551600837443915441259} (206 digits, \textbf{new prime!})\qquad
When discovered (by using Alpern's integer factorisation calculator~\cite{alpertron}), this 206-\/decimal\/-\/digit cofactor of $\consta(p=14)$ passed the probabilistic primality test in PARI/GP~\cite{PARI}. The new number was then \emph{proven} prime: its ECPP primality certificate is posted (in the folder \texttt{certificates}) ot the page~\cite{ShahCode2026}. This new big prime is now known in the factor database \textsf{factordb.com}.}

%\bigskip
\constentry{15}{30}{5046280762981107750679104744682397945183644990510297983747447227739387378343262856276570057278000846892728778975597920559603289367706909388720810389015995734049432105360766579402308571357991670074027401802837012706827921637362408233152407377844612325186847659973176040350451124500000000000}{$2^{11}$ $\cdot$ $3^{12}$ $\cdot$ $5^{12}$ $\cdot$ $7^{8}$ $\cdot$ $11^{3}$ $\cdot$ $13^{8}$ $\cdot$ $17$ $\cdot$ $29$ $\cdot$ $1123$ $\cdot$ $17932261$ $\cdot$ $8489704739$ $\cdot$ \seqsplit{35998370445164576398101362857001143665554257672255123083172710733119209893273858946239169040707559122646097087923030609777705717637816136357406155850135970673341568179916114837126949781310546805693740854278750149253153844930248619} (230 digits) (Composite)}

\constentry{16}{32}{350347068714984736363932448237551970947101428496433603230732475590577900010181317126682573243908900673762047143180626209387668068243491830426020243032372403718844363541878051373670782851465908976221799839150582908549751779698942620965693605901539896056419209507463625246596806195287565518966511086429743510950560190101128006162062508032000000}{$2^{49}$ $\cdot$ $3^{3}$ $\cdot$ $5^{6}$ $\cdot$ $7^{4}$ $\cdot$ $11$ $\cdot$ $13^{6}$ $\cdot$ $31$ $\cdot$ $53$ $\cdot$ $13996862959$ $\cdot$ $1465248299707$ $\cdot$ $17702417285659$ $\cdot$ \seqsplit{19399381221922156394859726481744458639754617182495325977416723636130785464893244452976916241528983739855884897178054454260725970969267639142211137008175006605365104441340775719092268370556183219090726164967115024280373741600899730002362383006544058860601785950398780953433} (272 digits) (Composite)}

\constentry{17}{34}{3197965883718109658323875604160413673114110008256001027316537941562490229065710229228394006993322313820835800118977741980981666285691001688471038276184835301225716345256441297648297778156138839796695800796167935359107753061518287142550811266192801172345646571321353980199954353184664914329752958720466214691618390475966876541322445714517556045466760936086521525116490257124597804914826798986071900160}{$2^{33}$ $\cdot$ $3^{3}$ $\cdot$ $5$ $\cdot$ $7^{2}$ $\cdot$ $13^{5}$ $\cdot$ $17^{16}$ $\cdot$ $19$ $\cdot$ $31$ $\cdot$ $877$ $\cdot$ $929$ $\cdot$ $3257$ $\cdot$ $2689989762199$ $\cdot$ \seqsplit{740893235261480061066369800736286834375200993794572718877900823673459171856568150710971203521710470310935010864805341984387145148222914842648798528800359668217848827563577111175460530206534928328665520912912418440566330019075622068279081603539327836363280503601529048141594582483643558675360408890717800316663382909629248428054934137259} (336 digits) (Composite)}

\constentry{18}{36}{4881345103627038434382130908280685053592076051971210164732335609526015977369348085157295992786258699940148334639842817509456108934457975820613023024887154481856472085069145948635571059231120856851092723894345630267577594029979505586755230940245438340156251154793718275320412012578420220836631816065580811293601113213338449375184941564894156534923570432671711739323358307397134414181151074757894647732483849095839327364106371485172952704772018032231441635265740800}{$2^{32}$ $\cdot$ $3^{31}$ $\cdot$ $5^{2}$ $\cdot$ $13^{2}$ $\cdot$ $17^{15}$ $\cdot$ $134011379699$ $\cdot$ \seqsplit{1135320664536347698997355677019288226996440304812297179826252019490655673960903598904434917814856583463331267604665524404316235193491942218352826423172687088348533326208332980465822220210215338297352387643698008355949353731654718511703019073944146898822864417030951270778949149503295129620590760576030600991879865152205018289039791250429542395467439020268700915048055047582500470106401165398601545848352717} (406 digits) (Composite)}

\end{document}